\newtheorem{theorem}{Theorem}[section]
\newtheorem{lemma}[theorem]{Lemma}
 \newtheorem{prop}[theorem]{Proposition}
\newtheorem{cor}[theorem]{Corollary}
\theoremstyle{definition}
\newtheorem{definition}[theorem]{Definition}
\newtheorem{example}[theorem]{Example}
\newtheorem{remark}[theorem]{Remark}
\newtheorem{fact}[theorem]{Fact}
\numberwithin{theorem}{section}
\newcommand{\sos}{\operatorname{SOS}}
\newcommand{\soc}{\operatorname{SOC}}
\newcommand{\oc}{\operatorname{OC}}
\newcommand{\os}{\operatorname{OS}}
\newcommand{\ocg}{\operatorname{OC(\varGamma)}}
\newcommand{\osg}{\operatorname{OS(\varGamma)}}
\newcommand{\so}{\operatorname{SO}}
\newcommand{\og}{\operatorname{O}}
\newcommand{\gl}{\operatorname{GL}}
\newcommand{\g}{\varGamma}
\newcommand{\OO}{\mathcal O}
\newcommand{\sca}{\operatorname{scal}_\varGamma}
\newcommand{\rk}{\operatorname{rank}}
\newcommand{\re}{\right}
\newcommand{\li}{\left}
\newcommand{\NN}{\mathbbm{N}}
\newcommand{\CC}{\mathbbm{C}}
\newcommand{\QQ}{\mathbbm{Q}}
\newcommand{\LQ}{\mathbbm{Q}}
\newcommand{\RR}{\mathbbm{R}}
\newcommand{\ZZ}{\mathbbm{Z}}
\newcommand{\LZ}{\mathbbm{Z}}
\newcommand{\s}{\mathbb S}
\newcommand{\klg}{\leqslant}
\newcommand{\grg}{\geqslant}
\DeclareMathOperator{\Ker}{Ker}
\newcommand{\bmidb}[2]{\{\nonscript\,{#1}\mid{#2}\nonscript\,\}}
\newdimen{\standardlabelwidth}
\newcommand{\standardlabel}[1]{#1\kern\standardlabelwidth}
\newenvironment{enumerateA}{%
\setlength{\labelsep}{0pt}%
\setlength{\leftmargini}{0pt}%
\begin{enumerate}%
\setlength{\labelwidth}{0pt}%
\setlength{\itemindent}{0pt}%
}{\end{enumerate}}
\begin{document}
\title{Similarity and Coincidence Isometries for Modules}
\author{Svenja Glied}
\address{Fakult\"at f\"ur Mathematik, Universit\"at Bielefeld, Postfach 100131, 33501 Bielfeld, 
       Germany}
\curraddr{}
\email{sglied@math.uni-bielefeld.de}

\thanks{}

\begin{abstract}
          The groups of (linear) similarity and coincidence isometries of certain modules $\g$ in 
          $d$-dimensional Euclidean space, which naturally occur in quasicrystallography, are considered.
          It is shown that the structure of the factor group of similarity modulo coincidence isometries 
          is the direct sum of cyclic groups of prime power orders that divide $d$.
          In particular, if the dimension $d$ is a prime number $p$, the factor group is an elementary 
          Abelian $p$-group. This generalizes previous results obtained for lattices to situations 
          relevant in quasicrystallography.

\end{abstract}

\maketitle
MSC: 20H15, 82D25, 52C23 

\section{Introduction}\label{intro}
The classification of colour symmetries and that of grain boundaries in crystals 
and quasicrystals are
closely related to the existence of similar and coincidence
sublattices of the underlying lattice of periods or the corresponding
translation module; cf.~\cite{baake,Gri}. 
 It is thus of interest to understand the
corresponding groups of isometries from a more mathematical perspective.
For a free $\ZZ$-module $M\subset \RR^d$ of finite rank that spans $\RR^d$, an element 
$R\in \og(d,\RR)$ is called a 
{\em coincidence isometry of $M$} if $RM$ and $ M$ are commensurate, written $RM \sim M$, which means 
that their intersection has finite index both in $M$ and in $RM$. We let $\oc(M)$ denote the set of all 
coincidence isometries of $M$. 
More generally, a {\em similarity isometry of $M$} is an element
$T \in \og(d,\RR)$ with $\alpha T M \sim M$ for some positive real number $\alpha$. This definition was 
first introduced for lattices in \cite{Heu}. The set $\os(M)$ of all 
similarity isometries of $M$ obviously contains $\oc(M)$ as a subset.

For subrings $\mathcal{S}$ of the rings of integers of real algebraic number fields, we consider 
the similarity and coincidence isometries of free $\mathcal S$-modules $\g\subset \RR^d$ of rank $d$ 
that span $\RR^d$. 
For a separate treatment of the crystallographic case $\mathcal S = \ZZ$, where $\g$ is a lattice, the 
reader is referred to \cite{Gli}.
We show that the similarity isometries of $\g$ form a group that  
contains the coincidence isometries as a normal subgroup. 
The corresponding factor group of similarity modulo coincidence isometries is the direct 
sum of cyclic groups of prime power orders that divide $d$ (Thm.~\ref{Hauptaussage}).
In the case of {\em $\mathcal S$-modules over $K$ in $\RR^d$} (cf.~ Def.~\ref{Def module over K}), where 
$K$ is the quotient field of $\mathcal S$, 
the factor group is either trivial or an elementary Abelian $2$-group, depending on the parity of 
$d$.  This includes the standard icosahedral modules and the rings of cyclotomic integers in 
complex cyclotomic fields.

\section{$\ZZ$-modules}
Let us begin by recalling some well-known facts on Abelian groups.
If $M$ is an Abelian group and $N\subset M$ a subgroup of finite index $[M:N]=k$, then a 
direct consequence of Lagrange's Theorem is that $kM$ is a subgroup of $N$.

\begin{fact}\label{PID-modules}
  Let $M$ be a free $\ZZ$-module of finite rank.
  \begin{enumerate}
    \item \label{f1 PID-modules}
    If $N$ is a submodule of $M$, then $N$ is also a free $\ZZ$-module with 
    $\rk(N)\klg \rk(M)$.
    \item \label{f2 PID-modules}
    If $N$ is a submodule of $M$ of finite index, then $N$ has the same rank as $M$.
  \end{enumerate}
\end{fact}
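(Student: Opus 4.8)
The plan is to treat the two parts separately: part~(\ref{f1 PID-modules}) carries the real content, and part~(\ref{f2 PID-modules}) will follow from it together with the divisibility observation recorded just above. For part~(\ref{f1 PID-modules}) I would argue by induction on $n=\rk(M)$, exploiting that $\ZZ$ is a principal ideal domain. The base case $n=0$ is immediate, since $M=\{0\}$ forces $N=\{0\}$; and for $n=1$ one identifies $M$ with $\ZZ$, so that a submodule $N$ is an ideal $m\ZZ$, which is either $\{0\}$ or isomorphic to $\ZZ$, hence free of rank at most $1$.

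For the inductive step I would fix a basis $e_1,\dots,e_n$ of $M$ and let $\pr\colon M\to\ZZ$ be the projection onto the last coordinate, so that $M'=\Ker(\pr)$ is free of rank $n-1$. Restricting $\pr$ to $N$ yields the short exact sequence
\begin{equation*}
  0\longrightarrow N\cap M'\longrightarrow N\longrightarrow \pr(N)\longrightarrow 0 .
\end{equation*}
The inductive hypothesis, applied to $N\cap M'\subset M'$, shows the kernel is free of rank at most $n-1$, while $\pr(N)$ is a submodule of $\ZZ$ and hence free of rank at most $1$. Since $\pr(N)$ is free, the sequence splits, giving $N\cong (N\cap M')\oplus\pr(N)$, which is free of rank at most $(n-1)+1=n$; this closes the induction.

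For part~(\ref{f2 PID-modules}), suppose $[M:N]=k<\infty$. As recalled above, $kM\subseteq N\subseteq M$. Multiplication by $k$ is an injective $\ZZ$-linear map, so $kM$ is free of the same rank as $M$. Feeding the chain $kM\subseteq N\subseteq M$ into part~(\ref{f1 PID-modules}) gives $\rk(M)=\rk(kM)\klg\rk(N)\klg\rk(M)$, and therefore $\rk(N)=\rk(M)$.

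I expect the genuine obstacle to lie in the inductive step of part~(\ref{f1 PID-modules}): the rank estimate by itself is routine, but establishing \emph{freeness} of $N$ is exactly what requires the principal ideal domain structure. The cleanest way around it is the splitting of the displayed sequence, which relies on $\pr(N)$ being free, equivalently projective.
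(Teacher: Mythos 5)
Your proposal is correct, and for part~(\ref{f2 PID-modules}) it is essentially identical to the paper's argument: the chain $kM\subseteq N\subseteq M$, the observation that $kM$ is free of the same rank as $M$, and the sandwich $\rk(M)=\rk(kM)\klg\rk(N)\klg\rk(M)$. The only difference is that for part~(\ref{f1 PID-modules}) the paper simply cites \cite[Thm.~6.2]{aw}, whereas you spell out the standard induction over the rank with the split short exact sequence — which is precisely the argument behind that reference, so no new route is taken.
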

\begin{proof}
      Cf.~\cite[Thm.~6.2]{aw} for \eqref{f1 PID-modules}.   
      If $[M:N]= k \in \NN$, then $kM\subset N$. If $\{m_1,\ldots,m_r\}$ is a $\ZZ$-basis of
      $M$, 
      then $\{km_1,\ldots,km_r\}$ forms a $\ZZ$-basis for the free module $kM$. 
      Using \eqref{f1 PID-modules} yields 
        $\rk(M)= \rk(kM) \klg \rk(N) \klg \rk(M).$
\end{proof}
\begin{fact}{\cite[Ch.\ 2, Lemma 6.1.1]{BS}}\label{det}
  If $M$ is a torsion-free Abelian group of rank $r$, and $N$ is a subgroup which is also of 
  rank $r$, then the index $[M:N]$ is finite and equals the absolute value of the determinant of 
  the transition matrix from any basis of $M$ to any basis of $N$.\qed
\end{fact}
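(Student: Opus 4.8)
The plan is to reduce the statement to the elementary divisor theorem (Smith normal form) for integer matrices. Since the statement speaks of "any basis of $M$", I interpret a basis as a $\ZZ$-module basis, so that $M$ is in fact a free $\ZZ$-module of rank $r$ (equivalently, one may note that in the intended application $M$ is finitely generated, and a finitely generated torsion-free Abelian group is free). By Fact~\ref{PID-modules}\eqref{f1 PID-modules} the subgroup $N$ is then free as well, and being of rank $r$ it also admits a basis of $r$ elements. I would fix a basis $\{m_1,\ldots,m_r\}$ of $M$ and a basis $\{n_1,\ldots,n_r\}$ of $N$, and write each $n_j=\sum_{i=1}^r a_{ij}m_i$ with $a_{ij}\in\ZZ$, thereby defining the transition matrix $A=(a_{ij})\in\ZZ^{r\times r}$. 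A first observation is that $A$ is nonsingular: because $N$ has rank $r$, the $n_j$ are $\QQ$-linearly independent in $M\ten\QQ\cong\QQ^r$, so $A$ is invertible over $\QQ$ and $\det A\neq 0$.

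The central step is to apply the Smith normal form to write $A=UDV$ with $U,V\in\gl(r,\ZZ)$ and $D=\operatorname{diag}(d_1,\ldots,d_r)$, where the $d_i$ are positive integers (absorbing any signs into $U$). Passing from the chosen bases to the bases of $M$ and of $N$ adapted to $U$ and $V$, the transition matrix becomes the diagonal matrix $D$, so that $M/N\cong\bigoplus_{i=1}^r\ZZ/d_i\ZZ$. This at once shows the index is finite and gives $[M:N]=\prod_{i=1}^r d_i=\lvert\det D\rvert=\lvert\det A\rvert$.

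Finally, the clause that the determinant may be computed from \emph{any} pair of bases follows because any two bases of a free $\ZZ$-module of rank $r$ differ by a matrix in $\gl(r,\ZZ)$, hence of determinant $\pm1$; replacing the bases of $M$ and $N$ therefore multiplies $A$ on the left and right by unimodular matrices and leaves $\lvert\det A\rvert$ unchanged. The technical heart is the existence of the adapted (Smith) bases together with the identification of the index as the product of elementary divisors; once these are in place, the invariance of $\lvert\det A\rvert$ under change of basis is routine. The only genuine subtlety to guard against is the "basis" assumption itself — the argument relies on $M$ being free, which is why I pin down the interpretation of basis at the outset rather than working with an arbitrary torsion-free group of finite rank.
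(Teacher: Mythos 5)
Your proof is correct, but note that the paper does not actually prove this statement: it is quoted as a Fact directly from Borevich--Shafarevich \cite[Ch.~2, Lemma 6.1.1]{BS} with no argument given. What you have written is therefore a self-contained replacement for the citation, and it is essentially the standard proof (and, in substance, the one in the cited reference): reduce to the elementary divisor theorem, pass to Smith-adapted bases so that $n'_i = d_i m'_i$, conclude $M/N \cong \bigoplus_{i=1}^r \ZZ/d_i\ZZ$ of order $\prod_i d_i = \lvert\det A\rvert$, and then observe that unimodular changes of basis leave $\lvert\det A\rvert$ invariant. Your opening caveat is also well taken and is a genuine point, not a pedantic one: for a torsion-free group of finite rank that is not free the statement is simply false (e.g.\ $N=\ZZ$ inside $M=\QQ$, both of rank $1$, with $[M:N]$ infinite), so pinning "basis" down to mean a $\ZZ$-module basis, forcing $M$ to be free, is exactly the right reading; it is also the only reading consistent with how the Fact is used in the paper, where $M$ is always a free $\ZZ$-module of finite rank in $\RR^d$. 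The one step you compress --- that the $V^{-1}$-transformed tuple is again a basis of $N$ and the $U$-transformed tuple again a basis of $M$ --- is routine since $U,V \in \gl(r,\ZZ)$, so there is no gap.
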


Together with Fact~\ref{PID-modules}\eqref{f2 PID-modules}, this gives the following equivalence.
\begin{lemma}\label{rank index}
  Let $M\subset \RR^d$ be a free $\ZZ$-module of finite rank and let $N\subset M$ be a submodule.
  Then the index $[M:N]$ is finite if and only if $\rk(N) = \rk(M)$. \qed
\end{lemma}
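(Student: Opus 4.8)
The plan is to prove the two implications separately, each being a direct appeal to one of the two facts just recorded. Before doing so, I would note two structural observations that guarantee the hypotheses of those facts are in force. First, since $M\subset\RR^d$ is a $\ZZ$-submodule of a $\QQ$-vector space, both $M$ and its submodule $N$ are automatically torsion-free. Second, by Fact~\ref{PID-modules}\eqref{f1 PID-modules}, $N$ is itself a free $\ZZ$-module of finite rank with $\rk(N)\klg\rk(M)$. With these in hand, both directions reduce to citing the appropriate result.

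For the forward direction, I would assume that $[M:N]$ is finite. Then Fact~\ref{PID-modules}\eqref{f2 PID-modules} applies verbatim and yields $\rk(N)=\rk(M)$, so nothing further is needed. For the converse, I would assume $\rk(N)=\rk(M)=:r$. Since $M$ is torsion-free of rank $r$ and $N\subset M$ is a subgroup of the same rank $r$, the hypotheses of Fact~\ref{det} are met; that fact then gives that $[M:N]$ is finite (indeed equal to the absolute value of the determinant of the transition matrix from a basis of $M$ to a basis of $N$). Combining the two implications establishes the claimed equivalence.

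I do not anticipate any genuine obstacle here: the lemma is essentially a repackaging of Facts~\ref{PID-modules} and~\ref{det}, and the only point demanding even a moment's attention is checking that the torsion-freeness and freeness premises of those facts hold. Both follow immediately from $M$ sitting inside $\RR^d$, which is why the statement can reasonably be recorded without a lengthy argument.
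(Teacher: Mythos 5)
Your proof is correct and follows exactly the paper's intended argument: the paper records this lemma with a \qed precisely because, as stated just before it, the equivalence is obtained by combining Fact~\ref{PID-modules}\eqref{f2 PID-modules} (finite index implies equal rank) with Fact~\ref{det} (equal rank implies finite index), which is what you do. Your added check that torsion-freeness holds because $M\subset\RR^d$ is a sensible, if routine, verification of the hypotheses of Fact~\ref{det}.
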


 Two free $\ZZ$-modules $M,M'\subset\RR^d$ of finite rank are called \emph{commensurate} 
 if their intersection has finite (subgroup) index both in $M$ and in $M'$. In this case we write 
 $M \sim M'$.
 Furthermore, if $M\sim M'$, then $M\cap M'$ is a free $\ZZ$-module with
 $\rk(M)=\rk(M')=\rk(M\cap M')$ by Facts~\ref{PID-modules}\eqref{f1 PID-modules} and 
 \ref{PID-modules}\eqref{f2 PID-modules}.\label{rank commensurate}
  


\begin{lemma}\label{comm}
  Commensurability of free $\ZZ$-modules of finite rank $r$ contained in $\RR^d$ is
  an equivalence relation.
\end{lemma}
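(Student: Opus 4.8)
The plan is to verify the three defining properties of an equivalence relation on the set of rank-$r$ free $\ZZ$-modules in $\RR^d$, with transitivity being the only substantial point. Reflexivity is immediate: $M\cap M = M$ has index $1$ in $M$, so $M\sim M$. Symmetry is equally clear, since the condition defining $M\sim M'$ is manifestly symmetric in $M$ and $M'$ — the intersection $M\cap M' = M'\cap M$ is required to have finite index in each of the two modules, and this requirement does not distinguish them.

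For transitivity, suppose $M\sim M'$ and $M'\sim M''$, where all three modules have rank $r$. I want to show $M\sim M''$, i.e.\ that $M\cap M''$ has finite index both in $M$ and in $M''$. By Lemma~\ref{rank index}, this is equivalent to showing $\rk(M\cap M'') = r$, so the whole problem reduces to a statement about ranks, where the facts already assembled can be brought to bear.

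The key step is to control the triple intersection $M\cap M'\cap M''$. Since $M\sim M'$ and $M'\sim M''$, both $M\cap M'$ and $M'\cap M''$ have finite index in $M'$; write $[M':M\cap M'] = k_1$ and $[M':M'\cap M''] = k_2$. As recalled at the start of this section, finite index gives $k_1 M'\subset M\cap M'$ and $k_2 M'\subset M'\cap M''$, whence $k_1 k_2 M'\subset (M\cap M')\cap(M'\cap M'') = M\cap M'\cap M''$. Because multiplication by the nonzero integer $k_1 k_2$ is injective on the torsion-free module $M'$, the module $k_1 k_2 M'$ is free of rank $r$, so Fact~\ref{PID-modules}\eqref{f1 PID-modules} forces $\rk(M\cap M'\cap M'') = r$.

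Finally I would sandwich the target module: from $M\cap M'\cap M''\subset M\cap M''\subset M$, together with Fact~\ref{PID-modules}\eqref{f1 PID-modules}, we obtain $r\klg \rk(M\cap M'')\klg r$, so $\rk(M\cap M'') = r$. Lemma~\ref{rank index} then yields that $M\cap M''$ has finite index in $M$, and by the symmetric argument it also has finite index in $M''$, giving $M\sim M''$. The only mild obstacle is the bookkeeping with the triple intersection and the choice of the common full-rank submodule $k_1 k_2 M'$; once that is in place, everything else is a direct application of the rank criterion of Lemma~\ref{rank index} and the facts already established.
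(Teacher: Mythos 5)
Your proposal is correct and follows essentially the same route as the paper: both proofs reduce transitivity to the observation that $k_1k_2M'$ (the paper's $s_{12}s_{23}M_2$) sits inside the triple intersection $M\cap M'\cap M''$, forcing that intersection to have full rank $r$, and then conclude via the rank criterion of Lemma~\ref{rank index}. Your write-up merely makes explicit the final sandwiching step that the paper compresses into ``as a consequence, one obtains $M_1\sim M_3$.''
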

\begin{proof}
  Reflexivity and symmetry are clear by definition. For the
  transitivity, let $M_1\sim M_2$ and $M_2\sim M_3$. 
  In particular, the indices $s_{12}=[M_2:(M_1\cap M_2)]$ 
  and $s_{23}=[M_2:(M_2\cap M_3)]$ are
  finite. One obtains $s_{12}M_2\subset (M_1\cap M_2)$ and
  $s_{23}M_2\subset (M_2\cap M_3)$. Since $M_1,M_3$ and
  $s_{12}s_{23}M_2$ are free $\ZZ$-modules of rank $r$ in
  $\RR^d$, Lemma~\ref{rank index} together with the relation
  $$
  s_{12}s_{23}M_2\subset (M_1\cap M_2\cap M_3)
  $$
  now implies that $M_1\cap M_2\cap M_3$ is of finite index
  both in $M_1$ and $M_3$. As a consequence, one obtains 
  $M_1\sim M_3$. 
\end{proof}

Alternatively to the definition of $\os(M)$ above, one easily verifies that
  \begin{equation}\label{zwei Defs os}
    \os(M) = \bmidb{R\in \og(d,\RR)}{\beta RM \subset M \text{ for some } \beta \in \RR_+},
  \end{equation}
where $\RR_+$ denotes the set of positive real numbers.

Therefore, $\os(M)$ consists of all linear isometries that arise from similarity mappings of $M$ into 
itself. We call a submodule of $M$ of the form $\beta RM$ a {\em similar submodule of $M$}. 
(For similar submodules in four dimensions, see \cite{Moo}.)
\begin{lemma}\label{os group}
  Let $M\subset\RR^d$ be a free $\ZZ$-module of finite rank $r$ that spans $\RR^d$. The sets $\os(M)$ and 
  $\oc(M)$ are subgroups of $\og(d,\RR)$.
\end{lemma}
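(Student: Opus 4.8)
The plan is to reduce everything to a single observation: any invertible linear map preserves commensurability and sends a free $\ZZ$-module of rank $r$ to another such module. Granting this, the subgroup axioms follow formally from the transitivity and symmetry of commensurability recorded in Lemma~\ref{comm}. Note first that both sets are nonempty and in fact contain the identity, since $\id\,M = M \sim M$, and that $\oc(M)\subseteq\os(M)$ by taking $\alpha = 1$ in the definition of $\os(M)$. Hence it suffices to verify, for each of the two sets separately, closure under composition and under taking inverses.

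The key step, and the main technical point, is the following claim: if $A\in\gl(d,\RR)$ and $M',M''$ are commensurate free $\ZZ$-modules of rank $r$ in $\RR^d$, then $AM'\sim AM''$, and $AM'$ is again free of rank $r$. To see this I would use that $A$ is a linear bijection of $\RR^d$, so it restricts to a group isomorphism on any subgroup and thereby preserves all subgroup indices; in particular $A(M'\cap M'') = AM'\cap AM''$ by injectivity, whence $[AM':AM'\cap AM''] = [M':M'\cap M'']$ and likewise with the roles of $M'$ and $M''$ exchanged. Both indices are therefore finite, giving $AM'\sim AM''$. That $AM'$ is free of rank $r$ follows because $A$ carries a $\ZZ$-basis of $M'$ to a $\ZZ$-independent generating set of $AM'$. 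This applies in particular to the isometries $R\in\og(d,\RR)$, to the scalings $x\mapsto\beta x$ with $\beta\in\RR_+$, and to arbitrary compositions of these.

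Closure and inverses for $\oc(M)$ then follow directly. Given $R,S\in\oc(M)$, apply the claim with $A=R$ to the relation $SM\sim M$ to obtain $RSM\sim RM$; since also $RM\sim M$, transitivity (Lemma~\ref{comm}) yields $RSM\sim M$, i.e.\ $RS\in\oc(M)$. For the inverse, apply the claim with $A=R^{-1}$ to $RM\sim M$ to get $M\sim R^{-1}M$, so that $R^{-1}\in\oc(M)$.

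For $\os(M)$ I would argue in the same spirit, now allowing scalings. Let $T,U\in\os(M)$, say $\alpha TM\sim M$ and $\gamma UM\sim M$ with $\alpha,\gamma\in\RR_+$. Applying the claim with the linear map $A=\alpha T$ to $\gamma UM\sim M$ gives $\alpha\gamma\,TUM\sim\alpha TM\sim M$, so $TU\in\os(M)$. For the inverse, applying $A=\alpha^{-1}T^{-1}$ to $\alpha TM\sim M$ gives $M\sim\alpha^{-1}T^{-1}M$, whence $T^{-1}\in\os(M)$ since $\alpha^{-1}\in\RR_+$. Throughout, every module occurring is free of rank $r$ by the claim, so Lemma~\ref{comm} is indeed applicable. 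The only genuine obstacle is the claim itself; once the invariance of commensurability under $\gl(d,\RR)$ is established, both assertions are purely formal.
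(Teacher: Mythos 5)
Your proof is correct, but it is organized around a different key device than the paper's own argument. The paper treats the two sets asymmetrically: for $\os(M)$ it invokes the inclusion characterization \eqref{zwei Defs os}, getting closure from the chain $\alpha\beta RSM=\alpha R(\beta SM)\subset \alpha RM\subset M$ and inverses from Lemma~\ref{rank index} (the index $s=[\alpha^{-1}R^{-1}M:M]$ is finite, whence $s\alpha^{-1}R^{-1}M\subset M$); for $\oc(M)$ it uses the one-step subgroup test with $R^{\vphantom{-1}}_1R_2^{-1}$, tacitly using that isometries preserve commensurability, plus transitivity from Lemma~\ref{comm}. You instead isolate and prove the invariance statement explicitly --- any $A\in\gl(d,\RR)$ carries free rank-$r$ modules to free rank-$r$ modules and preserves commensurability, since indices are preserved under the isomorphism $A|_{M'}$ together with $A(M'\cap M'')=AM'\cap AM''$ --- and then deduce both subgroup properties formally from Lemma~\ref{comm}, working throughout with the commensurability definition of $\os(M)$ rather than with \eqref{zwei Defs os}. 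What your route buys: it is uniform in the two cases, it needs neither the equivalence \eqref{zwei Defs os} (which the paper leaves as ``one easily verifies'') nor Lemma~\ref{rank index}, and it makes explicit the invariance principle that the paper's $\oc(M)$ argument uses silently (in the steps $R^{\vphantom{-1}}_2M\sim M\Rightarrow M\sim R_2^{-1}M$ and $M\sim R_2^{-1}M \Rightarrow R^{\vphantom{-1}}_1M\sim R^{\vphantom{-1}}_1R_2^{-1}M$). What the paper's route buys: once \eqref{zwei Defs os} is granted, the closure argument for $\os(M)$ is a two-line nesting of inclusions with no commensurability bookkeeping at all. Your remark that every module occurring is again free of rank $r$, so that Lemma~\ref{comm} genuinely applies, closes the one gap such a formal argument could otherwise have had.
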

\begin{proof}
  Let $R, S \in \os(M)$. Due to equation~\eqref{zwei Defs os}, there exist positive real numbers 
  $\alpha, \beta$ with 
  $\alpha RM \subset M$ and $\beta S M \subset M$.
  Hence $RS\in \os(M)$, because 
  \begin{equation*}
    \alpha\beta RSM = \alpha R (\beta SM) \subset \alpha R M \subset M.
  \end{equation*}
  One also has $M \subset \alpha^{-1}R^{-1}M$, which implies that the group index
  $[\alpha^{-1}R^{-1}M:M]=s$ is finite by Lemma~\ref{rank index}. Thus $s\alpha^{-1}R^{-1}M \subset M$ is 
  also of finite index. This shows $R^{-1} \in \os(M)$. 
  For the group property of $\oc(M)$ let $R_1, R_2 \in \oc(M)$.  
   $R_2M\sim M$ yields  $M\sim R_2^{-1}M$, and hence
  $R_1M\sim R_1R_2^{-1}M$. On the other hand, $M\sim R_1M\sim R_1R_2^{-1}M$ implies 
  $M \sim R_1R_2^{-1}M$, because commensurability is transitive by Lemma~ \ref{comm}. 
\end{proof}
Let us briefly turn to the subgroup of orientation preserving similarity isometries 
$\sos(M)\subset \os(M)$, which are by definition those similarity isometries $R$ with $\det(R)=1$. 
For planar lattices, these similarity rotations are rather well understood; cf.~\cite{Baa}.
A $\ZZ$-module in an algebraic number field $K$ of degree $n$ is called {\em full}, 
if it contains $n$ linearly independent elements over $\QQ$.
If a full $\ZZ$-module in $K$ is a ring and contains the number $1$, it is called an 
{\em order of $K$}. Any order of $K$ is contained in the ring of algebraic integers $\mathcal O_K$ of $K$,
which is itself an order. Hence $\mathcal O_K$ is also called the \emph{maximal order of $K$}; 
cf.~\cite{BS}.
In the following results on orders of imaginary algebraic number fields, we parametrise the 
Euclidean plane by the complex numbers $\CC$ and furthermore, we use $\so(2,\RR) \simeq \s^1$.

\begin{lemma}\label{sos Ordnung allg}
  Let $K$ be an imaginary algebraic number field and let $\OO$ be an order of $K$. 
  Then 
  \begin{equation*}
    \sos(\OO) 
         =\big\{a/|a| \big | a \in \OO\setminus\{0\}\big\}.
  \end{equation*}
\end{lemma}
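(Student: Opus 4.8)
The plan is to identify $\sos(\OO)$ with a subset of $\s^1 \simeq \so(2,\RR)$ and then use the reformulation \eqref{zwei Defs os} of $\os$ together with the ring structure of the order $\OO$ to pin down exactly which unit complex numbers occur. Under the parametrisation of the plane by $\CC$, an orientation-preserving isometry is a rotation, that is, multiplication by some $u \in \s^1$; hence every element of $\sos(\OO)$ is of this form, and conversely multiplication by any $u \in \s^1$ has determinant $1$. So the statement reduces to showing that the set of $u \in \s^1$ admitting some $\beta \in \RR_+$ with $\beta u \OO \subset \OO$ equals $\{a/|a| : a \in \OO\setminus\{0\}\}$.

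For the inclusion ``$\supseteq$'', I would fix $a \in \OO\setminus\{0\}$ and set $u := a/|a| \in \s^1$ and $\beta := |a| \in \RR_+$, so that $\beta u = a$. Since $\OO$ is an order and hence closed under multiplication, one has $a\OO \subset \OO$, that is, $\beta u \OO \subset \OO$. By \eqref{zwei Defs os} the rotation given by $u$ lies in $\os(\OO)$, and being a rotation it has determinant $1$, so it belongs to $\sos(\OO)$.

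For the reverse inclusion ``$\subseteq$'', I would take $R \in \sos(\OO)$, write it as multiplication by some $u \in \s^1$, and invoke \eqref{zwei Defs os} to obtain $\beta \in \RR_+$ with $\beta u \OO \subset \OO$. Since an order contains $1$, evaluating the inclusion at $1 \in \OO$ yields $a := \beta u \in \OO$, and $a \neq 0$ because $\beta > 0$ and $|u| = 1$. Then $|a| = \beta$ and $a/|a| = u$, which exhibits $u$ in the required form and closes the argument.

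The proof is essentially a direct unwinding of the definitions, so I do not expect a serious obstacle; the only points requiring care are the two complementary uses of the order axioms --- closure under multiplication for ``$\supseteq$'' and the presence of $1$ for ``$\subseteq$'' --- and verifying that orientation preservation corresponds precisely to multiplication by a unit complex number, rather than to an orientation-reversing reflection $z \mapsto u\bar z$, so that exactly $\sos(\OO)$ and not all of $\os(\OO)$ is captured.
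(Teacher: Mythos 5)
Your proof is correct and follows essentially the same route as the paper: both directions rest on the reformulation \eqref{zwei Defs os}, using closure of the order under multiplication (so that $a\OO \subset \OO$) for one inclusion and the presence of $1 \in \OO$ (so that $\beta u \in \OO$) for the other, with the identification $\so(2,\RR)\simeq\s^1$ handling orientation. Your explicit remark distinguishing rotations from reflections $z\mapsto u\bar z$ is a minor point of added care that the paper leaves implicit.
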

\begin{proof}
  For $0\not=a \in \OO$, one has
  $ |a|\cdot a/|a|\OO\subset \OO$,
  because $\OO$ is a ring. Hence $a/|a| \in \sos(\OO)$.
  Conversely, let $r \in \sos(\OO)$, meaning $r \in \s^1$ with 
  $\lambda r \OO \subset \OO$ for 
  some $\lambda \in \RR_+$. Since $1 \in \OO$, this yields
  $\lambda r \in \OO$, say $\lambda r = \beta$.
  Thus $|\lambda|=|\beta|$, because $r \in \s^1$.
  This shows that $r = \pm\beta/|\beta|$ is an $\OO$-direction.
\end{proof}
 There is a close connection between similar submodules of orders $\OO$ of algebraic number fields
 $K$ that arise from rotations and the principal ideals of these orders. 
 The special cases where $K$ is an $n$-th cyclotomic 
 field of class number $1$ and $\OO=\ZZ[\zeta_n]$ (with $\zeta_n$ an $n$-th primitive root of unity),
 or where $\OO$ is a planar lattice with non-generic multiplier ring, can be found in 
 \cite{Gri} and \cite{Baa}.  
\begin{theorem}
  Let $K$ be an imaginary algebraic number field 
  and let $\OO$ be an order of $K$.
  Then the similar submodules of $\OO$ of the form $\alpha R\OO$ 
  with $R \in \sos(\OO)$ are precisely the 
  principal ideals of $\OO$, i.e., the ideals of the form $\kappa \OO$ with $\kappa \in \OO$.
  Moreover, one has
  \begin{equation*}
     [\OO:\kappa \OO] = |N(\kappa)|,
  \end{equation*}
  where  
  $N$ denotes the field norm of $K$.  
\end{theorem}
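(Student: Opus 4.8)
The plan is to treat the two assertions in turn, and throughout to pass between the geometric and algebraic pictures via the identification $\so(2,\RR)\simeq\s^1$. Concretely, a planar similarity isometry $\alpha R$ with $\alpha\in\RR_+$ and $R\in\sos(\OO)$ acts on $\CC$ as multiplication by the complex number $\kappa:=\alpha u$, where $u\in\s^1$ is the unit-modulus number representing the rotation $R$; consequently $\alpha R\OO=\kappa\OO$ as subsets of $\CC$. This is the dictionary that connects the problem to Lemma~\ref{sos Ordnung allg}.

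For the characterization of similar submodules I would prove both inclusions. If $\alpha R\OO\subset\OO$ is a similar submodule with $R\in\sos(\OO)$, I set $\kappa=\alpha u$ as above; since $1\in\OO$ we obtain $\kappa=\kappa\cdot 1\in\kappa\OO=\alpha R\OO\subset\OO$, whence $\kappa\in\OO$ and $\alpha R\OO=\kappa\OO$ is a principal ideal. Conversely, given a nonzero principal ideal $\kappa\OO$ with $\kappa\in\OO$, I would put $\alpha:=|\kappa|>0$ and $R:=\kappa/|\kappa|\in\s^1$; by Lemma~\ref{sos Ordnung allg} the direction $\kappa/|\kappa|$ lies in $\sos(\OO)$ because $\kappa\in\OO\setminus\{0\}$, one has $\alpha R\OO=\kappa\OO$, and $\kappa\OO\subset\OO$ is an honest submodule since $\OO$ is a ring. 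Together these give the asserted equality of the two families.

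For the index formula I would invoke the determinant criterion of Fact~\ref{det}. Write $n=[K:\QQ]$ and fix a $\ZZ$-basis $\{\omega_1,\dots,\omega_n\}$ of the full module $\OO$. As $\kappa\neq 0$, multiplication by $\kappa$ is injective on $K$, so $\{\kappa\omega_1,\dots,\kappa\omega_n\}$ is a $\ZZ$-basis of $\kappa\OO$; in particular $\kappa\OO$ has rank $n$ and the index $[\OO:\kappa\OO]$ is finite. The transition matrix $A$ from $\{\omega_i\}$ to $\{\kappa\omega_j\}$ is precisely the matrix, in the basis $\{\omega_i\}$, of the $\QQ$-linear multiplication map $x\mapsto\kappa x$ on $K$, whose determinant is by definition the field norm $N(\kappa)$. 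Applying Fact~\ref{det} with $M=\OO$ and $N=\kappa\OO$ then yields $[\OO:\kappa\OO]=|\det A|=|N(\kappa)|$.

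I do not anticipate a genuine obstacle here; the one point that deserves care is the geometric-to-algebraic dictionary of the first paragraph, which is exactly what lets Lemma~\ref{sos Ordnung allg} feed into the characterization. It is also worth noting that Fact~\ref{det} is purely group-theoretic, so the index computation is valid even when $\OO$ is not discrete in $\CC$ (as happens, e.g., for rings of cyclotomic integers of degree exceeding two), and no separate argument for the non-lattice case is required.
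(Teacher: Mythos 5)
Your proof is correct, and the two halves deserve separate verdicts. The characterization of similar submodules is essentially the paper's own argument: both directions run through the identification $\so(2,\RR)\simeq\s^1$, the forward direction extracts $\kappa\in\OO$ by evaluating at $1\in\OO$, and the converse invokes Lemma~\ref{sos Ordnung allg} to see that $\kappa/|\kappa|\in\sos(\OO)$ — this is exactly what the paper does (the paper also cites the lemma in the forward direction, but only to name $R$ as a unit complex number, which your dictionary accomplishes directly). Where you genuinely diverge is the index formula. The paper appeals to Minkowski theory, following Borevich--Shafarevich: it passes to a Minkowski representation $x(\OO)$, which realizes $\OO$ as an honest lattice in $\RR^n$, and computes $[\OO:\kappa\OO]=[x(\OO):x(\kappa\OO)]=|N(\kappa)|$ there. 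You instead apply Fact~\ref{det} directly with $M=\OO$ and $N=\kappa\OO$, observing that the transition matrix is the matrix of the $\QQ$-linear map $x\mapsto\kappa x$ in a $\ZZ$-basis of $\OO$, whose determinant is $N(\kappa)$. This is more self-contained: it uses only a tool the paper has already stated, together with the identification of the field norm with the determinant of the multiplication map (in Borevich--Shafarevich the norm is defined via conjugates, so strictly speaking that identification is a standard theorem rather than a definition — a terminological quibble, not a gap). Your closing remark is also on point: Fact~\ref{det} holds for arbitrary torsion-free Abelian groups of finite rank, so no discreteness of $\OO$ in $\CC$ is required; the paper's detour through the Minkowski embedding serves precisely to restore discreteness when the rank of $\OO$ exceeds $2$, and your route shows that detour can be bypassed.
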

\begin{proof}
  Let $R \in \sos(\OO)$ and $\alpha \in \RR_+$ with 
  $\alpha R \OO \subset \OO$. Due to Lemma~ \ref{sos Ordnung allg}, there exists a nonzero
  $\delta \in \OO$ such that $ R = \delta/|\delta|$.
  Then $1 \in \OO$ implies  $\alpha\delta/|\delta| \in \OO$. Hence $\alpha R \OO$ is a principal ideal
  of $\OO$.
  Conversely, for any nonzero $\kappa' \in \OO$ one has $\kappa'\OO \subset \OO$, because $\OO$ is
  a ring. Setting $R'= \kappa'/|\kappa'|$, one has $R' \in \sos(\OO)$ by 
  Lemma~\ref{sos Ordnung allg}, and $|\kappa'|R'\OO \subset \OO$.
   
  The second claim follows by a standard argument in Minkowski theory; cf.~\cite[Sec.~ 3, Ch.~ 2]{BS}.
  Considering a Minkowski representation $x(\OO)$ of $\OO$, one finds
  \begin{equation*}
      [\OO : \kappa \OO] = [x(\OO): x(\kappa\OO)] = |N(\kappa)|.
  \end{equation*}
 \end{proof}

\section{$\mathcal S$-Modules}\label{sec moduln}

Let $\mathcal S \subset \RR$ be a ring with unity that is also a finitely generated $\ZZ$-module, hence
a free $\ZZ$-module of finite rank $r$. Furthermore, let $K$ be the field of fractions of $\mathcal S$.

Throughout this section, let $\g\subset \RR^d$ be a free $\mathcal S$-module of rank $d$ that spans 
$\RR^d$, meaning that it is the $\mathcal S$-span of an $\RR$-basis of $\RR^d$.  
Besides the case $\mathcal S=\ZZ$, where $\g$ is a lattice in $\RR^d$, this also covers many
important examples relevant in quasicrystallography; cf.~Example \ref{Beispiel}.

\begin{remark}
  \begin{enumerateA}
    \item
     Every element of $\mathcal S$ is an algebraic integer and  $K$ is a real algebraic number field.
    \item
     $\mathcal S$ is integrally closed if and only if $\mathcal S$ is the ring of integers $\OO_K$ in 
     $K$.
    \item
     $\g$ is a free $\ZZ$-module of rank $rd$.
    \item
     In fact, by standard results from algebra, rings $\mathcal S$ as above are precisely the subrings of 
     rings of integers in real algebraic number fields
     ; cf.~\cite{Huck} for more on this.

  \end{enumerateA}
\end{remark}
\begin{remark}\label{rank}
  Let $\g_1,\g_2$ be free $\mathcal S$-modules of 
  rank $d$ that span $\RR^d$. 
  If $\g_1$ and $\g_2$ are commensurate, then $\g_1\cap \g_2$ is a 
  free $\ZZ$-module of rank $rd$ (cf.~p.~\pageref{rank commensurate}) and it spans $\RR^d$. 
  Namely, if $\g_1 \sim \g_2$, then one has $m=[\g_1: (\g_1\cap\g_2)]<\infty$.
  Hence $m\g_1\subset (\g_1 \cap \g_2)$, which implies that $\g_1 \cap \g_2$ contains an 
  $\RR$-basis of $\RR^d$.
\end{remark}

The following result is of fundamental importance; compare~\cite[Thm. 2.1]{ZOU} 
for the special case $\mathcal S = \ZZ$ ($K=\QQ$).

\begin{theorem}\label{charac}
  Let $\varGamma_1,\varGamma_2\subset\RR^d$ be free $\mathcal S$-modules of rank $d$ that span
  $\RR^d$. Further, let $B_1, B_2\in\operatorname{GL}(d,\RR)$
  be basis matrices of the $\mathcal S$-modules $\varGamma_1$ and $\varGamma_2$,
  respectively. Then, one has
  $$
  \varGamma_1\sim \varGamma_2 \,\,\Leftrightarrow \,\, B_2^{-1}B_1\in\operatorname{GL}(d,K)\,.
  $$   
\end{theorem}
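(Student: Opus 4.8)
The plan is to funnel both directions of the equivalence through a single algebraic condition, namely the existence of a nonzero $m\in\mathcal{S}$ with $m\g_1\subset\g_2$. Writing $\g_i=B_i\mathcal{S}^d$ (so that the columns of $B_i$ form the chosen $\mathcal{S}$-basis of $\g_i$), I would first record the dictionary between the matrix statement and this inclusion: the entries of $B_2^{-1}B_1$ lie in $K=\operatorname{Frac}(\mathcal{S})$ exactly when some nonzero $m\in\mathcal{S}$ clears all their denominators, i.e.\ when $m\,B_2^{-1}B_1$ is a matrix over $\mathcal{S}$, and reading this column by column is the same as $m\g_1\subset\g_2$. Each implication of the theorem then reduces to passing between commensurability and such an inclusion.

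For the forward direction I would assume $\g_1\sim\g_2$ and take $m=[\g_1:\g_1\cap\g_2]$, which is finite by hypothesis. The elementary consequence of Lagrange's theorem recalled earlier (a subgroup of finite index $k$ contains $k$ times the ambient group) gives $m\g_1\subset\g_1\cap\g_2\subset\g_2$, so each column of $m\,B_2^{-1}B_1$ lies in $\mathcal{S}^d$. Since $\mathcal{S}$ is a ring with unity it contains $\ZZ$, whence $m\in\mathcal{S}$ and $B_2^{-1}B_1$ has entries in $\tfrac1m\mathcal{S}\subset K$. Being a product of elements of $\gl(d,\RR)$ it is invertible, so it lies in $\gl(d,K)$.

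For the reverse direction I would start from $A:=B_2^{-1}B_1\in\gl(d,K)$, clear the denominators of its entries (legitimately, since $\mathcal{S}\subset\RR$ is an integral domain, for instance by multiplying through by the product of all denominators) to obtain a nonzero $m\in\mathcal{S}$ with $mA$ over $\mathcal{S}$, and conclude $m\g_1=B_2(mA)\mathcal{S}^d\subset\g_2$. The real work is then to upgrade this one-sided inclusion to genuine commensurability. Here I would switch to the $\ZZ$-module viewpoint: each $\g_i$ is a free $\ZZ$-module of full rank $rd$, and since $m\neq0$ multiplication by $m$ is injective on $\RR^d$, so $m\g_1$ is again of $\ZZ$-rank $rd$. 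Because $m\in\mathcal{S}$ also forces $m\g_1\subset\g_1$, we have $m\g_1\subset\g_1\cap\g_2\subset\g_1$, so the intersection is caught between $\ZZ$-modules of rank $rd$ and therefore has rank $rd$ itself; Lemma~\ref{rank index} then forces both $[\g_1:\g_1\cap\g_2]$ and $[\g_2:\g_1\cap\g_2]$ to be finite, which is precisely $\g_1\sim\g_2$.

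I expect this last step to be the main obstacle: a single inclusion $m\g_1\subset\g_2$ is a priori weaker than commensurability, and converting it into two-sided finiteness of indices is where the bookkeeping matters. The care points will be to regard each $\g_i$ as a free $\ZZ$-module of the full rank $rd$ rather than merely as an $\mathcal{S}$-module of rank $d$, to note that scaling by the nonzero real number $m$ preserves that rank, and to apply the rank criterion of Lemma~\ref{rank index} to both inclusions simultaneously. By contrast, the clearing-of-denominators on each side is routine; the only subtlety is to keep the denominator inside $\mathcal{S}$ and to use $\ZZ\subset\mathcal{S}$, so that the integer index $m$ arising in the forward direction is genuinely an element of $\mathcal{S}$.
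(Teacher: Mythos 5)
Your proof is correct, and the converse direction is essentially the paper's own: both clear denominators to get a nonzero $m\in\mathcal S$ with $mB_2^{-1}B_1\in\operatorname{Mat}(d,\mathcal S)$, deduce the sandwich $m\g_1\subset\g_1\cap\g_2\subset\g_1$ of free $\ZZ$-modules of rank $rd$, and conclude finiteness of the indices from Lemma~\ref{rank index}. The forward direction is where you genuinely diverge. The paper applies Remark~\ref{rank} to extract an $\RR$-basis of $\g_1\cap\g_2$ with matrix $B$, writes $B_1Z_1=B=B_2Z_2$ with non-singular $Z_1,Z_2\in\operatorname{Mat}(d,\mathcal S)$, and obtains $B_2^{-1}B_1=Z_2Z_1^{-1}\in\gl(d,K)$ via the adjugate formula for $Z_1^{-1}$. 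You instead scale by the integer index $m=[\g_1:\g_1\cap\g_2]$, so that the Lagrange fact gives $m\g_1\subset\g_1\cap\g_2\subset\g_2$, hence $mB_2^{-1}B_1\in\operatorname{Mat}(d,\mathcal S)$, and division by $m\in\ZZ\subset\mathcal S$ puts the entries in $K$. This funnels both implications through the single dictionary ``$m\g_1\subset\g_2$ for some nonzero $m\in\mathcal S$ if and only if $B_2^{-1}B_1\in\gl(d,K)$'' and avoids inverting a matrix over $\mathcal S$. A second minor deviation: the paper gets $[\g_2:\g_1\cap\g_2]<\infty$ ``by symmetry'' (in effect rerunning the argument with $(B_2^{-1}B_1)^{-1}$), whereas you observe that once $\g_1\cap\g_2$ is known to have full $\ZZ$-rank $rd$, Lemma~\ref{rank index} applies at once to both inclusions $\g_1\cap\g_2\subset\g_1$ and $\g_1\cap\g_2\subset\g_2$. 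On balance your argument is a little more uniform and economical, while the paper's keeps the roles of $\g_1$ and $\g_2$ manifestly symmetric; both rest on the same two pillars, namely the Lagrange index fact and the rank criterion for finiteness of the index.
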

\begin{proof}Let firstly $\g_1 \sim \g_2$.
  By Remark~ \ref{rank}, the intersection $\g'=\g_1\cap \g_2$ contains an $\RR$-basis $\mathcal B$ 
  of $\RR^d$. 
  Let $B\in\operatorname{GL}(d,\RR)$ be the associated matrix. Then
  there exist non-singular matrices
  $Z_1,Z_2\in\operatorname{Mat}(d,\mathcal S)$ such that
  $$
  B_1Z_1=B=B_2Z_2\,,
  $$
  whence $B_2^{-1}B_1=Z_2Z_1^{-1}\in\operatorname{GL}(d,K)$ by the
  standard formula for the inverse of a matrix. Conversely, if
  $B_2^{-1}B_1\in\operatorname{GL}(d,K)$, then there is a non-zero number
  $s\in \mathcal S$ such that $B=s B_2^{-1}B_1\in\operatorname{Mat}(d,\mathcal S)$. 
  Setting $\varGamma'=\varGamma_1\cap
  \varGamma_2$, the identity $s B_1=B_2B$ implies that 
  $s\varGamma_1\subset \varGamma'\subset\varGamma_1$. 
  Since $s\varGamma_1$ and
  $\varGamma_1$ are both free $\ZZ$-modules of rank $rd$, one obtains
  $[\varGamma_1:\varGamma']<\infty$. By symmetry, one also
  has $[\varGamma_2:\varGamma']<\infty$. Hence, $\varGamma_1\sim \varGamma_2$.
\end{proof}
\begin{definition}
 For an arbitrary element $R \in \og(d,\RR)$, define
  \begin{equation*}
     \sca(R)= \bmidb{\alpha \in \RR}{\varGamma \sim \alpha R \varGamma}.
  \end{equation*}
\end{definition}
Note that $\os(\g) = \bmidb{R \in \og(d,\RR)}{\sca(R)\not = \varnothing}$.

\begin{remark}\label{scal mult}
  If $\beta \in \sca(R)$, then there exists 
  a nonzero element $t \in \ZZ$ such that $t\beta R \g \subset \g$. 
  For if $\beta \in \sca(R)$, then the index $[\beta R\g :(\g\cap \beta R\g)]=t$ is finite and one has 
      $t\beta R\g \subset (\g\cap \beta R\g) \subset \g$.
 \end{remark}
 
\begin{lemma} \label{delta moduln} 
  For all elements $\alpha \in \sca(R)$ one has $\alpha^d \in K$. Thus $\alpha$ is an 
  algebraic number. 
\end{lemma}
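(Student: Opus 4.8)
The plan is to reduce the statement to the commensurability criterion of Theorem~\ref{charac} and then to extract $\alpha$ by a determinant computation. So let $\alpha \in \sca(R)$, which by definition means $\g \sim \alpha R\g$. First I would note that $\alpha \neq 0$: since commensurate modules have equal rank and $\g$ spans $\RR^d$, the module $\alpha R\g$ must also span $\RR^d$, which forces $\alpha \neq 0$. Now fix a basis matrix $B \in \gl(d,\RR)$ of $\g$. Because $\alpha R$ is an invertible linear map, $\alpha R B$ is a basis matrix of the $\mathcal S$-module $\alpha R\g$.

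Applying Theorem~\ref{charac} to the commensurate pair $\g_1 = \g$ (with basis matrix $B$) and $\g_2 = \alpha R\g$ (with basis matrix $\alpha R B$) yields
\begin{equation*}
  (\alpha R B)^{-1} B \in \gl(d,K).
\end{equation*}
The decisive step is to pass to determinants. Since any matrix in $\gl(d,K)$ has its determinant in $K^{\times}\subset K$, one obtains
\begin{equation*}
  \det\!\big((\alpha R B)^{-1} B\big)
    = \frac{\det(B)}{\det(\alpha R B)}
    = \frac{1}{\alpha^{d}\det(R)} \in K.
\end{equation*}
Here $R \in \og(d,\RR)$ gives $\det(R) = \pm 1$, so the above simplifies to $\pm\,\alpha^{-d} \in K$. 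As $\alpha \neq 0$, this shows $\alpha^{d} \in K$, which is the first claim.

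For the second assertion I would simply invoke that $K$, being the field of fractions of $\mathcal S$, is a real algebraic number field (as recorded in the remark following the definition of $\mathcal S$-modules), so every element of $K$ is algebraic over $\QQ$. In particular $\alpha^{d}$ is an algebraic number, and hence so is $\alpha$, since roots of algebraic numbers are again algebraic. I do not expect any real obstacle here: the only genuine idea is recognising that taking the determinant of the transition matrix isolates $\alpha^{d}$ and removes the orthogonal factor $R$ via $\det(R)=\pm1$; the remaining manipulations are routine.
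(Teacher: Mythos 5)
Your proof is correct and follows essentially the same route as the paper: apply Theorem~\ref{charac} to $\g$ and $\alpha R\g$ with basis matrices $B$ and $\alpha R B$, take determinants, and use $\det(R)=\pm1$. The only cosmetic difference is that you orient the transition matrix the other way, obtaining $\pm\alpha^{-d}\in K$ and then inverting (the paper gets $\alpha^d=\pm\det(\alpha R)\in K$ directly), and your algebraicity argument via roots of algebraic numbers matches the paper's tower-of-fields remark.
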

\begin{proof}
  One has $\alpha R \g \sim \g$ by assumption. Let $B$ be a basis matrix for $\g$. Then 
  $\alpha R B$ is a basis matrix for $\alpha R \g$. By Theorem~ \ref{charac}, one has 
  $ B^{-1}\alpha R B  \in \operatorname{GL}(d,K)$, 
  which immediately yields 
  $$\alpha^d=\pm\det(\alpha R) \in K.$$ 
  Hence $\alpha$ is algebraic over $K$, which implies that $K(\alpha)$ is a finite field 
  extension of $K$, and thus also of $\QQ$.
  Therefore $\alpha$ is algebraic over $\QQ$.
\end{proof}

Let $\RR^\bullet$ denote the multiplicative group formed by the nonzero real numbers. Denoting by  
$\RR^\bullet \gl(d,K)$ the group consisting of all elements of the form $t H$ with 
$t \in \RR^\bullet$ and $H \in \gl(d,K)$, there is the following consequence of Theorem~\ref{charac}.
\begin{cor}\label{cor charac}
   For any basis matrix $B_\g$ of $\g$, one has
  \begin{equation}\label{eq countable}
    \osg =\big( B_\g^{\vphantom{-1}} \li( \RR^\bullet \gl(d,K)\re)B_\g^{-1} \big)\cap \og(d,\RR)
  \end{equation}
  and
  \begin{equation*}
    \ocg =\big( B_\g ^{\vphantom{-1}}\gl(d,K) B_\g^{-1}\big)\cap \og(d,\RR).
  \end{equation*} 
\end{cor}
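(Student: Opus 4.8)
The plan is to derive both identities directly from the commensurability criterion in Theorem~\ref{charac}, the only work being to express the relevant rotated (and scaled) modules through their basis matrices and then to conjugate by $B_\g$. Throughout I use the elementary observation that if $\g$ is the $\mathcal S$-span of the columns of $B_\g$ and $A \in \gl(d,\RR)$ is any invertible linear map, then $A\g$ is the $\mathcal S$-span of the columns of $AB_\g$; hence $AB_\g$ is a basis matrix of $A\g$.

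For the coincidence isometries I would start from the definition that $R \in \ocg$ iff $R \in \og(d,\RR)$ and $R\g \sim \g$. Taking $RB_\g$ as basis matrix of $R\g$ and $B_\g$ as basis matrix of $\g$, Theorem~\ref{charac} gives $R\g \sim \g$ iff $B_\g^{-1}(RB_\g) = B_\g^{-1} R B_\g \in \gl(d,K)$, i.e.\ iff $R \in B_\g\,\gl(d,K)\,B_\g^{-1}$. Intersecting with $\og(d,\RR)$ then yields the second identity.

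For the similarity isometries I would use that $R \in \osg$ iff $R \in \og(d,\RR)$ and $\sca(R) \neq \varnothing$, that is, $\g \sim \alpha R\g$ for some real $\alpha$. First I note that $\alpha = 0$ is impossible, since the trivial module $\{0\}$ cannot be commensurate with the rank-$d$ module $\g$; thus any admissible $\alpha$ lies in $\RR^\bullet$, and $\alpha R B_\g$ is a genuine basis matrix of $\alpha R\g$. Applying Theorem~\ref{charac} to $\alpha R\g$ and $\g$ gives $\g \sim \alpha R\g$ iff $\alpha\,B_\g^{-1} R B_\g \in \gl(d,K)$. Hence $R \in \osg$ iff there is some $\alpha \in \RR^\bullet$ with $\alpha\,B_\g^{-1} R B_\g \in \gl(d,K)$, which is exactly the statement that $B_\g^{-1} R B_\g$ lies in $\RR^\bullet\,\gl(d,K)$: the forward direction produces such a factorisation with scalar $\alpha^{-1}$, and conversely a factorisation $B_\g^{-1} R B_\g = tH$ with $t \in \RR^\bullet$ and $H \in \gl(d,K)$ furnishes the admissible choice $\alpha = t^{-1}$. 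Intersecting with $\og(d,\RR)$ gives the first identity.

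There is no genuine obstacle here; the result is a bookkeeping corollary of Theorem~\ref{charac}. The two points that require a moment's care are the exclusion of the scaling factor $\alpha = 0$ (which is precisely what forces the factor $\RR^\bullet$ rather than all of $\RR$ in the similarity case) and the verification that the columns of $AB_\g$ really do generate $A\g$ as an $\mathcal S$-module. Since $\gl(d,K)$ and $\RR^\bullet\,\gl(d,K)$ are groups, the direction in which Theorem~\ref{charac} is applied, and hence whether $B_\g^{-1}RB_\g$ or its inverse appears, is immaterial.
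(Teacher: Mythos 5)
Your proposal is correct and follows essentially the same route as the paper: both arguments observe that $\alpha R B_\g$ is a basis matrix of $\alpha R\g$ and then read off the two identities directly from Theorem~\ref{charac}, with the scalar $\alpha^{-1}$ (resp.\ $t^{-1}$) mediating between the two factorisations. Your explicit treatment of the $\ocg$ case (the specialisation $\alpha=1$) and of the exclusion of $\alpha=0$ only makes fully precise what the paper leaves implicit.
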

\begin{proof}
  Let $\{\gamma_1,\ldots,\gamma_d\}$ be an $\mathcal{S}$-basis of $\g$ and denote by $B_\g$ the 
  associated matrix.
  For $R\in \osg$ there exists a positive real number $\alpha$ with $\alpha R \g \sim \g$.
  The set $\{\alpha R\gamma_1,\ldots,\alpha R\gamma_d\}$ is an 
  $\mathcal{S}$-basis of $\alpha R \g$ with associated matrix $B_{\alpha R \g}=\alpha R B_\g$.
  Theorem~\ref{charac} then implies that there exists an $ H \in  \gl(d,K)$ with
  $ H=B_\g^{-1}\alpha R B_\g $.
  Thus, one has $R= B_\g \alpha^{-1}H B_\g^{-1}$.

  If on the other hand $S\in \og(d,\RR)$ and 
     $ S= B_\g \beta J B_\g^{-1}$
  for some $\beta \in \RR^\bullet$ and $J \in \gl(d,K)$, then one has
  \begin{equation*}
    B_\g^{-1}B_{\beta^{-1}S\g} = B_\g^{-1} \beta^{-1}S B_\g\,\, \in \gl(d,K).
  \end{equation*}
  Theorem~\ref{charac} therefore implies $\beta^{-1}S\g \sim \g$, which shows $S \in \osg$.
\end{proof}

\begin{remark}\label{rem countable}
  By Corollary~\ref{cor charac}, every element $R \in \osg$ can be written as $R= B_\g\beta H B_\g^{-1}$ 
  with $\beta \in \RR^\bullet$ and $H \in \gl(d,K)$.
  Theorem ~\ref{charac} implies $\beta R \g \sim \g$ and hence $\beta \in \sca(R)$, which shows that 
  $\beta$ is an algebraic number and that $\beta^d \in K$ by Lemma~\ref{delta moduln}. 
  But the set of all algebraic numbers is 
  countable and so is $\gl(d,K)$. Therefore the group $\osg$ is countable and in particular, the 
  subgroup $\ocg$ is countable as well. 
  The explanations above imply that, in Corollaries~\ref{cor charac} and \ref{cor2 charac}, 
  $\RR^\bullet$ can be replaced by the set of all nonzero real numbers $\delta$ with $\delta^d \in K$.
\end{remark}

\begin{cor}\label{cor2 charac}
  Let $\g \subset K^d$. One has 
  \begin{equation*}
    \osg = \big(\RR^\bullet \gl(d,K)\big)\cap \og(d,\RR)
  \end{equation*}
   and
  \begin{equation*}
     \ocg = \og(d,K).
  \end{equation*}
\end{cor}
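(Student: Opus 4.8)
The plan is to deduce this from Corollary~\ref{cor charac} by exploiting the extra hypothesis $\g \subset K^d$, which permits a particularly convenient choice of basis matrix. First I would fix an $\mathcal{S}$-basis $\{\gamma_1,\ldots,\gamma_d\}$ of $\g$ and let $B_\g$ be the associated matrix. Since each $\gamma_i$ lies in $K^d$, all entries of $B_\g$ belong to $K$; and because $\g$ spans $\RR^d$, the matrix $B_\g$ is invertible, so $\det(B_\g)$ is a nonzero element of $K$. Hence $B_\g \in \gl(d,K)$, and likewise $B_\g^{-1}\in \gl(d,K)$.

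For the coincidence isometries, Corollary~\ref{cor charac} gives $\ocg = (B_\g \gl(d,K) B_\g^{-1}) \cap \og(d,\RR)$. Since $B_\g$ is itself an element of the group $\gl(d,K)$, conjugation by $B_\g$ maps $\gl(d,K)$ onto itself, so $B_\g \gl(d,K) B_\g^{-1} = \gl(d,K)$. It then remains to identify $\gl(d,K)\cap \og(d,\RR)$ with $\og(d,K)$. This is immediate: a matrix $R$ with entries in $K$ that lies in $\og(d,\RR)$ satisfies $R^{\mathsf T}R = \eins$, an identity over $K$, so $R \in \og(d,K)$; conversely $\og(d,K)\subset \gl(d,K)$ and, as $K\subset\RR$, also $\og(d,K)\subset\og(d,\RR)$. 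Thus $\ocg = \og(d,K)$.

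For the similarity isometries the argument is the same, with the only additional observation that the scalar factor $\RR^\bullet$ passes through the conjugation unchanged, since real scalars commute with every matrix. Corollary~\ref{cor charac} gives $\osg = (B_\g(\RR^\bullet\gl(d,K))B_\g^{-1})\cap \og(d,\RR)$, and since $B_\g(\RR^\bullet\gl(d,K))B_\g^{-1} = \RR^\bullet(B_\g\gl(d,K)B_\g^{-1}) = \RR^\bullet\gl(d,K)$, one obtains $\osg = (\RR^\bullet\gl(d,K))\cap\og(d,\RR)$, as claimed.

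The individual steps are routine; the only point that requires a moment's care — and the one I would regard as the crux — is the reduction to a $K$-rational basis matrix, i.e.\ the observation that $\g\subset K^d$ forces $B_\g\in\gl(d,K)$, so that the conjugating matrix becomes a genuine element of the group $\gl(d,K)$. Once this is in place, the whole statement collapses to the triviality that conjugation by a group element is an inner automorphism, together with the identification $\gl(d,K)\cap\og(d,\RR)=\og(d,K)$.
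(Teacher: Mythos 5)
Your proof is correct and takes essentially the same route as the paper: the hypothesis $\g\subset K^d$ gives $B_\g\in\gl(d,K)$, so conjugation by $B_\g$ fixes $\gl(d,K)$ (and hence also $\RR^\bullet\gl(d,K)$), and the statement follows from Corollary~\ref{cor charac}. The only difference is that you spell out the identification $\gl(d,K)\cap\og(d,\RR)=\og(d,K)$ and the commuting of real scalars with the conjugation, details the paper leaves implicit.
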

\begin{proof}
  The assumption $\g \subset K^d$ yields $B_\g \in \gl(d,K)$. Therefore, one has 
  $ B_\g ^{\vphantom{-1}}\gl(d,K) B_\g^{-1}= \gl(d,K)$, and the claim follows from 
  Corollary ~\ref{cor charac}.
\end{proof}

\begin{lemma}\label{scal}
 For $R \in \os(\varGamma)$, the following 
 assertions hold. 
  \begin{enumerate}
    \item\label{rel1 scal}    
       $ b\cdot \sca(R) = \sca(R)$ \/ for all \/ $b \in K\setminus\{0\}$     
    \item\label{rel3 scal}
       $ r\varGamma \sim \varGamma$ \/ with \/ $r\in \RR$ \/ implies \/ $r \in K$
    \item \label{rel2 scal}
       $ \alpha\beta^{-1} \in K$ \/ for all \/ $\alpha, \beta \in \sca(R)$       
  \end{enumerate}
\end{lemma}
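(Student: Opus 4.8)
My plan is to reduce all three assertions to a single reformulation of $\sca(R)$ obtained from Theorem~\ref{charac}. I would fix an $\mathcal{S}$-basis matrix $B$ of $\g$ and set $M:=B^{-1}RB$, which lies in $\gl(d,\RR)$ because $R\in\og(d,\RR)$ and $B\in\gl(d,\RR)$. For a nonzero real $\alpha$ the matrix $\alpha R B$ is a basis matrix of $\alpha R\g$, so applying Theorem~\ref{charac} to the pair $\alpha R\g$ and $\g$ gives
\begin{equation*}
  \alpha\in\sca(R)\quad\Longleftrightarrow\quad \alpha M=B^{-1}(\alpha R B)\in\gl(d,K).
\end{equation*}
I would also record that $0\notin\sca(R)$, since $\g$ has positive rank while $0\cdot R\g=\{0\}$ cannot be commensurate with it; hence every element of $\sca(R)$ is a nonzero real $\alpha$ with $\alpha M\in\gl(d,K)$. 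From here each part becomes an elementary statement about the group $\gl(d,K)$.

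For~\eqref{rel1 scal} I would take $\alpha\in\sca(R)$ and $b\in K\setminus\{0\}$. Since $\gl(d,K)$ is stable under multiplication by nonzero scalars from $K$, the matrix $(b\alpha)M=b(\alpha M)$ again lies in $\gl(d,K)$, so $b\alpha\in\sca(R)$ and thus $b\cdot\sca(R)\subseteq\sca(R)$. Applying the same reasoning with $b^{-1}\in K\setminus\{0\}$ in place of $b$ gives the reverse inclusion, and hence the claimed equality.

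Assertion~\eqref{rel3 scal} I would handle directly, without reference to $R$: if $r\g\sim\g$ with $r\in\RR$, then $r\neq0$ and $rB$ is a basis matrix of $r\g$, so the transition matrix $B^{-1}(rB)=rI$ must lie in $\gl(d,K)$, which forces $r\in K$. For~\eqref{rel2 scal} I would instead use the group structure: given $\alpha,\beta\in\sca(R)$, both $\alpha M$ and $\beta M$ lie in $\gl(d,K)$, whence
\begin{equation*}
  (\alpha M)(\beta M)^{-1}=\alpha\beta^{-1}\,MM^{-1}=\alpha\beta^{-1}I\in\gl(d,K),
\end{equation*}
and reading off a diagonal entry yields $\alpha\beta^{-1}\in K$. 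Note that~\eqref{rel3 scal} is precisely the special case $\alpha=r$, $\beta=1$, $R=\id$ of this computation.

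I do not expect a substantive obstacle here: once Theorem~\ref{charac} is in place, each part is a one-line matrix computation. The only points deserving care are the bookkeeping that $\sca(R)$ contains no zero, so that the basis matrix $\alpha R B$ and the inverse $(\beta M)^{-1}$ are legitimate, and the observation that the transition matrix in~\eqref{rel3 scal} is the \emph{scalar} matrix $rI$. This last point is what sharpens the conclusion from $r^d\in K$, as furnished by Lemma~\ref{delta moduln}, to the stronger statement $r\in K$.
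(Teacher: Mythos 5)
Your argument is correct, but it takes a genuinely different route from the paper's. You funnel all three assertions through Theorem~\ref{charac}: after fixing a basis matrix $B$ of $\g$ and setting $M=B^{-1}RB$, membership $\alpha\in\sca(R)$ becomes $\alpha M\in\gl(d,K)$ (for nonzero $\alpha$, with $0\notin\sca(R)$ checked separately), and then (1) is scalar-stability of $\gl(d,K)$ under $K^\bullet$, (2) says the scalar matrix $rI$ lies in $\gl(d,K)$ only when $r\in K$, and (3) is the group identity $(\alpha M)(\beta M)^{-1}=\alpha\beta^{-1}I\in\gl(d,K)$. The paper instead argues directly with commensurability: for (1) it writes $b=b_1/b_2$ with $b_1,b_2\in\mathcal S\setminus\{0\}$ and chains $b\,\alpha R\g\sim\tfrac{1}{b_2}\alpha R\g\sim\tfrac{1}{b_2}\g\sim\g$; for (2) it invokes Remark~\ref{scal mult} to get $ku\g\subset\g$ for a nonzero integer $k$ and compares coordinates with respect to an $\mathcal S$-basis of $\g$, which is an $\RR$-basis of $\RR^d$; and (3) is then deduced from (2) applied to the module $R\g$ (implicitly using that $R\g$ is again a free $\mathcal S$-module of rank $d$ spanning $\RR^d$). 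Your treatment is shorter and more uniform---it is essentially the same mechanism as Corollary~\ref{cor charac}, it exhibits directly that $\sca(R)$ is a single $K^\bullet$-coset (cf.\ Remark~\ref{rem scal}), and your direct proof of (3) avoids the small subtlety of transferring (2) from $\g$ to $R\g$. What the paper's route buys is that parts (2) and (3) rest only on the elementary Remark~\ref{scal mult} and linear independence, not on the matrix formalism. Your two bookkeeping points---that $\sca(R)$ contains no zero, and that the transition matrix in (2) is the \emph{scalar} matrix $rI$, which is exactly what upgrades $r^d\in K$ from Lemma~\ref{delta moduln} to $r\in K$---are the right ones to flag.
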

\begin{proof}
 Let $\alpha \in \sca(R)$.
 Since $K$ is the field of fractions of $\mathcal S$, every nonzero element $b\in K$ can be 
 written as
 $b=b_1/b_2$ with $b_1,b_2 \in \mathcal S\setminus\{0\}$. Then, one finds
 \begin{equation*}
    \frac{b_1}{b_2}\alpha R \varGamma \sim \frac{1}{b_2} \alpha R \varGamma 
         \sim \frac{1}{b_2}\g.
 \end{equation*}
  One easily observes that $\frac{1}{b_2}\g \sim \g$.
  Hence $b\alpha R \varGamma \sim \varGamma$, yielding $ b\cdot \sca(R) \subset \sca(R)$. Thus, one 
  also has $ b^{-1}\cdot \sca(R) \subset \sca(R)$, which proves \eqref{rel1 scal}. 
  In order to show \eqref{rel3 scal}, let $u\in \RR$ with $u\g\sim\g$. Due to 
  Remark~ \ref{scal mult}, there exists a nonzero integer $k$ such that $ku\g \subset \g$.
  Let $\gamma \in \g$ be represented in terms of an $\mathcal S$-basis 
  $\gamma_1,\ldots,\gamma_d$ of $\g$ as 
  $\gamma = \sum_{i=1}^d c_i\gamma_i$ with $c_i \in \mathcal S$.
  On the other hand, $ku\gamma$ can be represented as 
  $ku\gamma = \sum_{i=1}^d a_i\gamma_i$, where $a_i \in \mathcal S$. Thus
  $$\sum_{i=1}^d kuc_i\gamma_i = \sum_{i=1}^d a_i\gamma_i.$$
  By assumption, $\g$ spans the $\RR^d$. Hence $\{\gamma_1,\ldots,\gamma_d\}$ forms an 
  $\RR$-basis of $\RR^d$.
  Therefore, one has $kuc_i=a_i$, yielding $u=a^{\vphantom{-1}}_ic_i^{-1}k^{-1} \in K$. 
  Finally, \eqref{rel2 scal} is obtained from \eqref{rel3 scal} as follows. By assumption, 
  one has
  \begin{equation*}
     \beta R\varGamma \sim \varGamma \sim \alpha R \varGamma.
  \end{equation*}
  Multiplying with $1/\beta$ gives       
    $ R \varGamma \sim \frac{\alpha}{\beta} R \varGamma$,
  which completes the proof.  
 \end{proof}
In accordance with the previous notation, denote by $K^\bullet$ the multiplicative group formed by the 
nonzero elements of $K$.
\begin{remark}\label{rem scal}
  As a direct consequence of Lemma~\ref{scal}, one has
       $\sca(R) = \alpha K^\bullet$
  for any $\alpha \in \sca(R)$.
\end{remark}
 
 Define \label{eta} the map
\begin{equation*} 
   \eta\colon \os(\varGamma) \longrightarrow \RR^\bullet/K^\bullet\\
\end{equation*}
by
\begin{equation*}
   \qquad \,\, R \longmapsto \sca(R).
\end{equation*}
This map is well-defined due to the fact that $\sca(R)$ is non-empty for $R \in \os(\varGamma)$
and by Remark~\ref{rem scal}. 
\begin{lemma}\label{eta hom}
  The map $\eta$ is a group homomorphism with $\Ker(\eta)=\oc(\varGamma)$.
\end{lemma}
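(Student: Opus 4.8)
The plan is to verify the two assertions separately, using the coset normal form $\sca(R)=\alpha K^\bullet$ from Remark~\ref{rem scal} together with the fact that commensurability is preserved by the bijections of $\RR^d$ that appear here. The only genuinely geometric input I need is the elementary observation that if $M\sim M'$ are commensurate free $\ZZ$-modules and $\Phi$ is any $\RR$-linear bijection of $\RR^d$, then $\Phi M\sim\Phi M'$. This holds because $\Phi$ is injective, so $\Phi(M\cap M')=\Phi M\cap\Phi M'$, and the isomorphism $\Phi\colon M\to\Phi M$ induces an isomorphism $M/(M\cap M')\cong \Phi M/(\Phi M\cap\Phi M')$, whence the corresponding indices agree. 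I will apply this both to the orthogonal maps $R\in\og(d,\RR)$ and to the nonzero homotheties $x\mapsto\alpha x$.

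For the homomorphism property, let $R,S\in\os(\varGamma)$ and choose representatives $\alpha\in\sca(R)$ and $\beta\in\sca(S)$, which exist because both scaling sets are non-empty. By definition $\alpha R\varGamma\sim\varGamma$ and $\beta S\varGamma\sim\varGamma$. Applying the isometry $R$ to the second relation gives $\beta RS\varGamma\sim R\varGamma$, and multiplying through by the nonzero scalar $\alpha$ yields $\alpha\beta RS\varGamma\sim\alpha R\varGamma$. Since $\alpha R\varGamma\sim\varGamma$, transitivity of commensurability (Lemma~\ref{comm}) gives $\alpha\beta RS\varGamma\sim\varGamma$, that is, $\alpha\beta\in\sca(RS)$. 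In particular $\alpha\beta\neq0$, so $\alpha\beta\in\RR^\bullet$, and invoking Remark~\ref{rem scal} three times I obtain
\begin{equation*}
   \eta(RS)=\sca(RS)=(\alpha\beta)K^\bullet=(\alpha K^\bullet)(\beta K^\bullet)=\eta(R)\eta(S),
\end{equation*}
which is the required identity in the abelian group $\RR^\bullet/K^\bullet$.

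For the kernel, note that $\eta(R)$ is the identity element of $\RR^\bullet/K^\bullet$ exactly when $\sca(R)=K^\bullet$. If $R\in\oc(\varGamma)$, then $\varGamma\sim R\varGamma$, so $1\in\sca(R)$, and Remark~\ref{rem scal} forces $\sca(R)=1\cdot K^\bullet=K^\bullet$; hence $R\in\Ker(\eta)$. Conversely, if $R\in\Ker(\eta)$, then $\sca(R)=K^\bullet$, and since $1\in K^\bullet$ this means $1\in\sca(R)$, i.e.\ $\varGamma\sim R\varGamma$, so $R\in\oc(\varGamma)$. Therefore $\Ker(\eta)=\oc(\varGamma)$.

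The computations themselves are short, and I expect the only delicate point to be the first paragraph: the preservation of commensurability, and in particular of the subgroup indices, under the isometry $R$ and under scalar multiplication must be justified carefully, since this is precisely what lets me transport the defining relations of $\sca(R)$ and $\sca(S)$ into a single relation for $\sca(RS)$. Everything afterwards is coset bookkeeping in the abelian quotient $\RR^\bullet/K^\bullet$, combined with the normal form $\sca(R)=\alpha K^\bullet$ already established in Remark~\ref{rem scal}.
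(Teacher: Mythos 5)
Your proof is correct and follows essentially the same route as the paper: the paper also shows $\alpha\beta\in\sca(RS)$ via the chain $\varGamma \sim \alpha R \varGamma \sim \alpha R (\beta S \varGamma) = \alpha\beta RS\varGamma$ and handles the kernel by the identical argument that $1\in\sca(R)$ iff $\sca(R)=K^\bullet$. The only difference is one of exposition: you spell out (and justify) the fact that commensurability is preserved under $\RR$-linear bijections, a step the paper leaves implicit in its chain of equivalences.
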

\begin{proof}
 Let $R,S \in \os(\varGamma)$ and $\alpha \in \sca(R)$,
 $\beta \in \sca(S)$. We need to show that $\alpha\beta \in \sca(RS)$.
 By assumption, one has
 \begin{equation*}
   \varGamma \sim \alpha R \varGamma \sim \alpha R (\beta S \varGamma) = \alpha\beta RS\varGamma.
 \end{equation*}
  Thus $\alpha\beta \in \sca(RS)$, hence $\eta$ is a group homomorphism.
It remains to show that $\Ker(\eta)=\oc(\varGamma)$.
For $R \in \oc(\varGamma)$ the set $\sca(R)$ contains $1$, thus $ \eta (R) = \sca(R) = K^\bullet$ by
Remark~\ref{rem scal}.
Conversely, if $S \in \Ker(\eta)$, one has $\sca(S)= K^\bullet$, which implies $S \in \oc(\varGamma)$.
\end{proof}

As the kernel of a group homomorphism, $\oc(\varGamma)$ is a normal subgroup of 
$\os(\varGamma)$.
The factor group $\osg/\ocg$ is isomorphic to the image of
$\eta$, which is a subgroup of $\RR^\bullet/K^\bullet$ and thus Abelian.
Furthermore, $\osg/\ocg$ is countable by Remark~\ref{rem countable}. 
The corresponding result holds for the special case of orientation-preserving isometries by considering 
the restriction of $\eta$ to $\sos$.

To unfold the structure of the factor group $\osg/\ocg$, we need the following result from the theory 
of Abelian groups.
\begin{prop}{\cite[Thms.\ 5.1.9 and 5.1.12]{Scott}}\label{direct sum} Let G be an Abelian group.
 \begin{enumerate}
 \item\label{direct sum 1}
   If a prime number $p$ exists such that $x^p=1$ for all $x \in G$, then $G$ is the 
   direct sum of subgroups of order $p$.  
 \item\label{direct sum 2}
   If a positive integer $n$ exists such that $x^n=1$ for all $x \in G$, then 
   G is the direct sum of cyclic groups of prime power orders. \qed
 \end{enumerate}
\end{prop}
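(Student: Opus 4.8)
The two parts call for different tools---part~(1) being the special case $n=p$ of part~(2), but admitting a cleaner direct argument---so I would treat them separately. For part~(1), write $G$ additively, so that the hypothesis reads $px=0$ for every $x\in G$. This is exactly the condition needed for the field $\mathbb{F}_p=\ZZ/p\ZZ$ to act on $G$ via $\bar m\cdot x = mx$ in a well-defined way, turning $G$ into an $\mathbb{F}_p$-vector space. Every vector space has a basis---by Zorn's Lemma when $G$ is infinite-dimensional---and fixing a basis $(e_i)_{i\in I}$ yields $G=\bigoplus_{i\in I}\mathbb{F}_p e_i$. Each summand $\mathbb{F}_p e_i$ is cyclic of order $p$, which is precisely the claimed decomposition.

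For part~(2), I would first reduce to a single prime by the primary decomposition. Writing $n=p_1^{a_1}\cdots p_k^{a_k}$ and letting $G_{p_i}$ denote the $p_i$-primary component (the elements of $p_i$-power order), a B\'ezout relation $\sum_i c_i\,(n/p_i^{a_i})=1$ supplies projections $x\mapsto c_i(n/p_i^{a_i})x$ onto the components and gives $G=G_{p_1}\oplus\cdots\oplus G_{p_k}$, with each $G_{p_i}$ of exponent dividing $p_i^{a_i}$. Assembling summands over all $i$, it then suffices to show that a \emph{bounded} Abelian $p$-group is a direct sum of cyclic groups, since these cyclic summands automatically have $p$-power order.

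The heart of the matter is this last statement, which I would derive from Kulikov's criterion: an Abelian $p$-group is a direct sum of cyclic groups provided it is the union of an ascending chain of subgroups in which the heights of the nonzero elements are uniformly bounded on each term. A $p$-group $G$ with $p^kG=0$ meets this trivially, since every nonzero element already has height at most $k-1$; the constant chain $G\subseteq G\subseteq\cdots$ satisfies the hypothesis with uniform bound $k$, so $G$ decomposes as required, completing part~(2).

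The main obstacle lies entirely in the infinite case and is packaged inside Kulikov's criterion. For finite, or merely finitely generated, $G$ one may simply peel off a cyclic summand of maximal order---using that a cyclic subgroup generated by an element of maximal order is pure, and that a bounded pure subgroup is a direct summand---and iterate until the process stops. For arbitrary $G$ this recursion need not terminate, and the genuine work is to show that the cyclic summands can nonetheless be chosen coherently; establishing the splitting of bounded pure subgroups and controlling ascending chains of them (equivalently, proving Kulikov's criterion) is the technically delicate step. It is precisely boundedness of the exponent that keeps all heights finite and makes the purity arguments go through.
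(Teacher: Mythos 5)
Your argument is correct, but the comparison here is lopsided: the paper offers no proof of this proposition at all---the statement is quoted from Scott's book (Thms.~5.1.9 and 5.1.12, i.e.\ Pr\"ufer's theorems for Abelian groups of bounded exponent), and the terminal \qed\ signals that the citation stands in for the proof. You, by contrast, actually prove things. Part~(1) as you give it is complete and self-contained: the hypothesis makes $G$ a vector space over $\mathbb{F}_p$, Zorn's Lemma supplies a basis, and each basis line $\mathbb{F}_p e_i$ is a subgroup of order $p$, giving exactly the claimed decomposition. Part~(2) is correctly reduced: the B\'ezout relation $\sum_i c_i (n/p_i^{a_i})=1$ does yield the primary decomposition $G=G_{p_1}\oplus\cdots\oplus G_{p_k}$ with $p_i^{a_i}G_{p_i}=0$, and cyclic subgroups of a $p$-group automatically have $p$-power order; the remaining core---that a bounded Abelian $p$-group is a direct sum of cyclic groups (Pr\"ufer--Baer)---you delegate to Kulikov's criterion, which you state and apply correctly (if $p^kG=0$, every nonzero element has height at most $k-1$, so the constant chain qualifies). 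Thus your proof is rigorous modulo one named classical theorem, which puts it at the same logical status as the paper's wholesale citation while making considerably more of the structure explicit; the only way to sharpen it further would be to prove the bounded $p$-group case itself, either by establishing Kulikov's criterion or, more directly, via the purity arguments you sketch (bounded pure subgroups split off as direct summands), which is precisely the content of the theorems of Scott that the paper invokes.
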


\begin{theorem}\label{Hauptaussage} 
  The group $\osg/\ocg$ is the direct sum of cyclic groups of 
  prime power orders that divide $d$.
\end{theorem}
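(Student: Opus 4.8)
The plan is to realize $\osg/\ocg$ as a subgroup of $\RR^\bullet/K^\bullet$ via the homomorphism $\eta$ from Lemma~\ref{eta hom}, and then to show that this subgroup satisfies the hypothesis of Proposition~\ref{direct sum}\eqref{direct sum 2}. Since $\oc(\g)=\Ker(\eta)$, the factor group $\osg/\ocg$ is isomorphic to $\eta(\osg)\subset \RR^\bullet/K^\bullet$, an Abelian group. The decisive observation is the following: for any $R\in\osg$, choosing $\alpha\in\sca(R)$, Lemma~\ref{delta moduln} gives $\alpha^d\in K$, so that $(\alpha K^\bullet)^d = \alpha^d K^\bullet = K^\bullet$ in $\RR^\bullet/K^\bullet$. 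Hence $\eta(R)^d = 1$ for every $R\in\osg$, which is exactly the uniform exponent condition needed in Proposition~\ref{direct sum}\eqref{direct sum 2} with $n=d$.

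From this I would argue as follows. Every element $x=\eta(R)$ of $\eta(\osg)$ satisfies $x^d=1$, so by Proposition~\ref{direct sum}\eqref{direct sum 2} the group $\eta(\osg)\cong\osg/\ocg$ is a direct sum of cyclic groups of prime power orders. It remains to verify that the order of each such cyclic summand divides $d$. This is a purely group-theoretic consequence of the fact that the whole group has exponent dividing $d$: if $G=\bigoplus_i C_i$ with each $C_i$ cyclic of order $q_i$ a prime power, and $x^d=1$ for all $x\in G$, then taking a generator $x_i$ of $C_i$ forces $q_i \mid d$, since $x_i$ has order exactly $q_i$ and every element is killed by $d$. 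Thus each summand has order $q_i$ dividing $d$, completing the claim.

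The main conceptual step — and the one I expect to carry the essential content — is the identity $\eta(R)^d=1$, which rests squarely on Lemma~\ref{delta moduln}: the scaling factor $\alpha$ of any similarity isometry satisfies $\alpha^d\in K$, reflecting that the determinant of an isometry is $\pm1$ and that commensurate modules have basis matrices differing by an element of $\gl(d,K)$ (Theorem~\ref{charac}). Everything downstream is then a direct application of the structure theory quoted in Proposition~\ref{direct sum}. I would write the proof compactly, stating first that $\osg/\ocg\cong\eta(\osg)$ is Abelian with exponent dividing $d$, then invoking Proposition~\ref{direct sum}\eqref{direct sum 2}, and finally noting the elementary divisibility observation above to pin down the summand orders.

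A minor subtlety worth addressing explicitly is well-definedness of the exponent claim: since $\eta(R)=\sca(R)=\alpha K^\bullet$ is an entire coset (Remark~\ref{rem scal}), raising it to the $d$-th power gives $\alpha^d K^\bullet$, and the value $\alpha^d\in K$ from Lemma~\ref{delta moduln} holds for any representative, so $\eta(R)^d=K^\bullet$ is the identity of $\RR^\bullet/K^\bullet$ independently of the chosen $\alpha$. With this in place, no further case analysis on $d$ is required, and the theorem follows uniformly for all dimensions.
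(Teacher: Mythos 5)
Your proof is correct and follows essentially the same route as the paper: the homomorphism $\eta$ from Lemma~\ref{eta hom}, the exponent bound $\eta(R)^d=K^\bullet$ via Lemma~\ref{delta moduln}, then Proposition~\ref{direct sum}\eqref{direct sum 2}, and finally the observation that each cyclic summand's order divides the exponent $d$. You even spell out that last divisibility step (and the well-definedness of the exponent claim) more explicitly than the paper does, which is a small improvement in rigour but not a different argument.
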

\begin{proof}
  We consider again the group homomorphism $\eta\colon \osg \longrightarrow 
  \RR^\bullet/K^\bullet$.  Let $R \in \osg$.
  According to Lemma~ \ref{delta moduln}, one has $\alpha^d \in K^\bullet$ for any nonzero 
  $\alpha \in \sca(R)$, 
  which yields  
  \begin{equation}\label{delta^d is 1}
    \eta(R)^d=\sca(R)^d=(\alpha K^\bullet)^d = \alpha^d K^\bullet = K^\bullet
  \end{equation}
  in $\RR^\bullet/K^\bullet$.
  Using the group isomorphism $\eta(\osg) \simeq \osg/\ocg$, this shows that the order of each 
  element of $\osg/\ocg$ divides $d$.
  Proposition ~\ref{direct sum}\eqref{direct sum 2} then implies that 
  the group 
  $\osg/\ocg$ is the direct sum of cyclic groups of prime power orders. 
  Consequently, the prime power order of each cyclic group divides $d$.
\end{proof}
\begin{example}
  Denote by $\{e_1,\ldots, e_d\}$ the canonical
  basis of $\RR^d$.  Let $n\grg 1$ be a natural number with
  $\xi=\sqrt[d]{n} \not \in \QQ $. 
  The $\ZZ$-span $\g$ of $\bmidb{\xi^ie_i}{1\klg i\klg d}$
  is a lattice in $\RR^d$.  Consider the cyclic permutation
  $\sigma=(12....d)$ of the symmetric group $ S_d$. Then $\sigma$
  induces a linear isomorphism $R$ of $\RR^d$ by permuting the
  canonical basis vectors, i.e., $Re_i=e_{\sigma(i)}$.  Since
  $\xi R\g \subset \g$, $R$ is a similarity isometry of $\g$.  But
  $R$ is not a coincidence isometry of $\g$ (because $\sqrt[d]{n} \not
  \in \QQ$). Setting $m=\min\limits_{i}{\bmidb{\xi^i}{\xi^i \in \QQ}}$,
  one easily verifies that the factor group $\osg/\ocg$ contains the
  cyclic group $C_m$ of order $m$ generated by the equivalence class of $R$.
  If $m$ is not a prime power, then the fundamental theorem of finitely generated
  Abelian groups states that $C_m$ is the direct sum of cyclic groups of prime power orders.
  Examples of the module case can be constructed similarly.
\end{example}
\begin{cor}
  If $d=p$ is a prime number, then $\osg/\ocg$ is an elementary
  Abelian p-group, i.e., it is the direct sum of cyclic groups of order $p$. \qed
\end{cor}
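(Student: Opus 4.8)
The plan is to obtain this as an immediate specialization of Theorem~\ref{Hauptaussage}, since the case $d=p$ collapses the list of admissible orders to a single value. First I would recall that Theorem~\ref{Hauptaussage} expresses $\osg/\ocg$ as a direct sum of cyclic groups whose orders are prime powers dividing $d$. When $d=p$ is prime, the only prime powers dividing $p$ are $1$ and $p$ itself; discarding the trivial factors, every nontrivial summand must therefore be cyclic of order exactly $p$. This is precisely the definition of an elementary Abelian $p$-group, so the claim follows with no further work.

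Alternatively, and perhaps more transparently, I would return to the key computation in the proof of Theorem~\ref{Hauptaussage}. There we established via Lemma~\ref{delta moduln} that $\alpha^d\in K^\bullet$ for every nonzero $\alpha\in\sca(R)$, whence $\eta(R)^d=K^\bullet$ in $\RR^\bullet/K^\bullet$ for all $R\in\osg$. Under the isomorphism $\eta(\osg)\simeq\osg/\ocg$ supplied by Lemma~\ref{eta hom}, this says that $x^d=1$ for every $x\in\osg/\ocg$. Setting $d=p$ gives $x^p=1$ for all $x$, and Proposition~\ref{direct sum}\eqref{direct sum 1} then yields directly that $\osg/\ocg$ is the direct sum of subgroups of order $p$.

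There is no genuine obstacle here: the statement is a corollary precisely because the prime-power structure proved in general degenerates when $d$ is prime. The only point requiring a moment's care is the bookkeeping of orders, namely observing that a prime power dividing $p$ leaves $p$ as the sole nontrivial possibility, but this is immediate. I would favour invoking Proposition~\ref{direct sum}\eqref{direct sum 1} over part~\eqref{direct sum 2}, since the former is tailored exactly to the exponent-$p$ situation and delivers the elementary-Abelian conclusion without passing through the intermediate decomposition into prime powers.
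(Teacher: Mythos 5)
Your proposal is correct and matches the paper's treatment: the corollary carries a \qed{} with no separate argument, i.e.\ it is exactly the immediate specialization of Theorem~\ref{Hauptaussage} described in your first paragraph, since the only prime powers dividing a prime $p$ are $1$ and $p$. Your preferred alternative route --- reusing the computation $\eta(R)^d=K^\bullet$ from the theorem's proof and invoking Proposition~\ref{direct sum}\eqref{direct sum 1} directly --- is equally valid but only a cosmetic variant, as it reruns the same exponent argument with part~\eqref{direct sum 1} in place of part~\eqref{direct sum 2} of the proposition; what it buys is merely avoiding the (trivial) bookkeeping of discarding non-$p$ prime-power factors.
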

\begin{definition} \label{Def module over K}
  Denote by $\langle .\,,.\rangle$ the standard scalar product of $\RR^d$. 
  We call $\g$ an  {\em $\mathcal S$-module over $K$ in $\RR^d$} if it satisfies
  $\langle \gamma,\gamma\rangle \in K$ for all $\gamma \in \g$.
\end{definition}
\begin{example}\label{Beispiel}
 \begin{enumerate}
   \item
   Let $\mathcal S = \ZZ$. Then, $K = \QQ$ and the $\mathcal S$-modules $\varGamma$ over $K$ in $\RR^d$ 
   are precisely the \emph{rational lattices} in $\RR^d$; cf.~\cite{crs} for examples.
   \item
   For $n\in\NN$, $\mathcal{S}^n$ is an $\mathcal S$-module over $K$ in $\RR^n$.
   \item 
   Consider the quadratic number field $L:=\LQ(\tau)$, where $\tau$ is the
   golden ratio, i.e., $\tau=(1+\sqrt{5})/2$. Then, one has $\mathcal{O}_L=\LZ[\tau]$. 
   The icosian ring 
   $$\mathbbm{I}=\langle (1,0,0,0),(0,1,0,0),\tfrac{1}{2}(1,1,1,1),\tfrac{1}{2}(1-\tau,\tau,0,1)
        \rangle_{\mathcal{O}_L}\subset L^4$$ 
   is an $\mathcal{O}_L$-module over $L$ in $\RR^4$ (see~ \cite{Moo}).
   Further, both the standard body centred
   icosahedral module $\mathcal{M}_{\rm B}$ and the standard face centred
   icosahedral module $\mathcal{M}_{\rm F}$ of quasicrystallography are $\mathcal{O}_L$-modules
   over $L$ in $\RR^3$; cf.~\cite{baake,bpr} and references therein. 
   \item
   Consider the quadratic number field $L:=\LQ(\sqrt{2})$. Then,
   $\mathcal{O}_L=\LZ[\sqrt{2}]$ and further, the octahedral (or cubian) ring
   $$\mathbbm{K}=
       \langle (1,0,0,0),\tfrac{1}{\sqrt{2}}(1,1,0,0),\tfrac{1}{\sqrt{2}}(1,0,1,0),\tfrac{1}{2}(1,1,1,1)
   \rangle_{\mathcal{O}_L}\subset L^4$$ 
   is an $\mathcal{O}_L$-module
   over $L$ in $\RR^4$; cf.~\cite{Moo,bpr} and references therein. 
   \item\label{cyclotomic}
   Consider the complex cyclotomic field $\LQ(\zeta_m)$, where $m\geq 3$ and $\zeta_m$ is a 
   primitive
   $m$th root of unity in $\CC$ (e.g. $\zeta_m=e^{2\pi i/m}$). Recall that $\LQ(\zeta_m)$ is a finite 
   Galois extension of $\LQ$ with maximal real subfield 
   $L:=\LQ(\zeta_m)\cap\RR=\LQ(\zeta_m+\bar{\zeta}_m)$; cf.~\cite[Theorem 2.5]{Wa}. Further, it is well 
   known that $\mathcal{O}_{\LQ(\zeta_m)}=\LZ[\zeta_m]$ and
   $\mathcal{O}_{L}=\LZ[\zeta_m+\bar{\zeta}_m]$; cf.~\cite[Theorem 2.6 and Proposition 2.16]{Wa}. 
   Moreover, since $\zeta_m^2=(\zeta_m+\bar{\zeta}_m)\zeta_m-1$,
   the ring $\LZ[\zeta_m]$ is the $\LZ[\zeta_m+\bar{\zeta}_m]$-span of the $\RR$-basis 
   $\{1,\zeta_m\}$ of $\CC$. Identifying the complex numbers $\CC$ with $\RR^2$, one can now verify that 
   $\LZ[\zeta_m]$ is an
   $\mathcal{O}_L$-module over $L$ in $\RR^2$. 
   In particular, rings of integers in complex cyclotomic fields can be used to construct planar 
   mathematical quasicrystals such as the vertex sets of Penrose, Ammann-Beenker or shield tilings; 
   cf.~\cite{Mood, Grimm, Grim, Plea}. 
\end{enumerate}
\end{example}
  
\begin{theorem}\label{Skalarprodukt}
   For an $\mathcal S$-module $\g$ over $K$ in $\RR^d$ one has
  \begin{equation*}
    \osg^2 \subset \ocg,
  \end{equation*}
 where $\osg^2=\bmidb{R^2}{R\in \osg}$.
 Thus, the factor group $\osg/\ocg$ is an elementary Abelian $2$-group, when $d$ is even.
 If $d$ is odd, one has $\osg = \ocg$.
\end{theorem}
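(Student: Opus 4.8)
The plan is to read everything off the homomorphism $\eta\colon \osg \to \RR^\bullet/K^\bullet$, $R \mapsto \sca(R)$, from Lemma~\ref{eta hom}, whose kernel is $\ocg$. Since $\osg/\ocg \simeq \eta(\osg)$, the inclusion $\osg^2 \subset \ocg$ is equivalent to showing that $\eta(R)^2$ is trivial in $\RR^\bullet/K^\bullet$ for every $R \in \osg$; by Remark~\ref{rem scal} this amounts to proving $\alpha^2 \in K^\bullet$ for some (hence every) $\alpha \in \sca(R)$. So the whole theorem reduces to upgrading the weak bound $\alpha^d \in K$ from Lemma~\ref{delta moduln} to the sharp statement $\alpha^2 \in K$, and the scalar-product hypothesis $\langle\gamma,\gamma\rangle \in K$ of Definition~\ref{Def module over K} is precisely the extra input that makes this possible.

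To carry this out I would fix $R \in \osg$ and $\alpha \in \sca(R)$, so that $\alpha R\g \sim \g$. By Remark~\ref{scal mult} there is a nonzero integer $t$ with $t\alpha R\g \subset \g$. Now I pick any nonzero $\gamma \in \g$; then $t\alpha R\gamma \in \g$, so the defining property of an $\mathcal S$-module over $K$ gives $\langle t\alpha R\gamma,\, t\alpha R\gamma\rangle \in K$. Since $R \in \og(d,\RR)$ is a linear isometry, this quantity equals $t^2\alpha^2\langle\gamma,\gamma\rangle$. Because $\langle\gamma,\gamma\rangle = \|\gamma\|^2$ is a nonzero element of $K$ and $t^2 \in \ZZ\setminus\{0\} \subset K^\bullet$, dividing yields $\alpha^2 \in K^\bullet$. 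Consequently $\eta(R)^2 = \alpha^2 K^\bullet = K^\bullet$, and since $\eta$ is a homomorphism this is $\eta(R^2)$, so $R^2 \in \Ker(\eta) = \ocg$. As $R$ was arbitrary, this proves $\osg^2 \subset \ocg$.

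For the structural consequences I would feed this back into the factor group. The relation $\osg^2 \subset \ocg$ says exactly that every element of the (Abelian) group $\osg/\ocg$ squares to the identity, so Proposition~\ref{direct sum}\eqref{direct sum 1} with $p=2$ exhibits it as a direct sum of subgroups of order two, i.e.\ an elementary Abelian $2$-group. For odd $d$ I would combine $\alpha^2 \in K$ with $\alpha^d \in K$ from Lemma~\ref{delta moduln}: since $\gcd(2,d)=1$, Bézout supplies integers $a,b$ with $2a+db=1$, whence $\alpha = (\alpha^2)^a(\alpha^d)^b \in K$. Thus $\sca(R) \subset K^\bullet$, so $\eta(R) = K^\bullet$ and $R \in \ocg$ for every $R \in \osg$; together with the trivial inclusion $\ocg \subseteq \osg$ this gives $\osg = \ocg$.

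The main obstacle is really the single computational identity $\langle t\alpha R\gamma,\, t\alpha R\gamma\rangle = t^2\alpha^2\langle\gamma,\gamma\rangle$ together with its interpretation; everything else is bookkeeping with $\eta$ and the elementary Bézout trick. The only points demanding care are ensuring $\langle\gamma,\gamma\rangle \neq 0$ (guaranteed by choosing $\gamma \neq 0$, since the standard scalar product is positive definite) and checking that dividing by $t^2\langle\gamma,\gamma\rangle$ keeps us inside the field $K$, so that $\alpha^2$ genuinely lands in $K^\bullet$.
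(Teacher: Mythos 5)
Your proof is correct and takes essentially the same route as the paper: the central step --- using $\langle\gamma,\gamma\rangle\in K$ together with the isometry property of $R$ to upgrade Lemma~\ref{delta moduln} to $\alpha^2\in K^\bullet$ for $\alpha\in\sca(R)$, and then combining $\alpha^2\in K^\bullet$ with $\alpha^d\in K^\bullet$ to get $\alpha\in K^\bullet$ when $d$ is odd --- is exactly the paper's argument. The only (harmless) difference is in the bookkeeping: you conclude $R^2\in\ocg$ via $\eta(R^2)=\eta(R)^2=\alpha^2K^\bullet=K^\bullet$ and $\Ker(\eta)=\ocg$ from Lemma~\ref{eta hom}, whereas the paper re-derives the commensurability $\g\sim R^2\g$ directly by exhibiting $s_1R^2\g\subset\g\cap R^2\g$ and invoking the finiteness of the relevant indices.
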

\begin{proof}
  Let $R \in \osg$. Then there exists an element $\alpha \in \RR_+$ with
   $\alpha R \g \subset \g$. 
  By assumption, one has
   $\langle \alpha R \gamma,\alpha R \gamma\rangle \in K$ for all $\gamma \in \g$. 
  Hence
  $\alpha^2\in K^\bullet$, say $\alpha^2=s_1/s_2$, where 
  $s_1, s_2\in\mathcal S\setminus\{0\}$. 
  Since $s_2\alpha^2=s_1\in\mathcal S$ and $\alpha R\g\subset\g$, 
  this yields
  $$
  \g\supset s_2\alpha R(\alpha R\g)=s_2\alpha^2R^2\g
  \subset R^2\g,
  $$
  whence
  $$
  s_1R^2\g\subset (\g \cap R^2\g).
  $$
  Since $\g$, $R^2\g$ and $s_1R^2\g$ are $\ZZ$-modules of the same finite rank, one obtains
  that both $[\g:s_1R^2\g]$ and $[R^2\g:s_1R^2\g]$ are finite. It follows that
  $\g\sim R^2\g$, meaning that $R^2$ is a coincidence
  isometry of $\g$. Consequently, $\osg^2\subset\ocg$.
  Thus, every element of the factor group $\osg/\ocg$ is of order $1$ or $2$. 
  By Proposition~ \ref{direct sum}\eqref{direct sum 1}, the factor group is an elementary Abelian 
  $2$-group.
 
  If $d$ is odd, set $d = 2m+1$ with $m \in \NN$. Then
   \begin{equation*}
     \alpha(\alpha^2)^m = \alpha^d \,\, \in K^\bullet
   \end{equation*}
   yields $\alpha \in K^\bullet$, because $\alpha^2 \in K^\bullet$. 
   Thus $\eta(R)=\sca(R)=\alpha K^\bullet = K^\bullet$ in $\RR^\bullet/K^\bullet$ for all $R \in \osg$, 
   whence 
   $\osg/\ocg$ is the trivial group.
   In other words, one has $\osg = \ocg$ for $d$ odd.
\end{proof}
\begin{example}
  Using the notation of Example~\ref{Beispiel}\eqref{cyclotomic},
  consider a cyclotomic field $\QQ(\zeta_m)$ of class number one, meaning that
  its ring of integers $\ZZ[\zeta_m]$ is a unique factorization domain; see \cite{Roth} for more on this.
  $\ZZ[\zeta_m]$ is an $\mathcal{O}_L$-module
  over $L$ in $\RR^2$.  Since we are working in $2$-space,
  Theorem~\ref{Skalarprodukt} implies that the factor group of
  similarity modulo coincidence isometries is an elementary Abelian
  $2$-group. Combining the results of \cite{Roth} and \cite[Prop.~1.89]{Huc}, one immediately obtains
  \begin{equation*}
     \sos(\ZZ[\zeta_m])/\soc(\ZZ[\zeta_m])\simeq C_2\times C_2^{(\aleph_0)},
  \end{equation*}
  where $\sos$ and $\soc$ indicate the restriction to orientation-preserving isometries, and 
  $ C_2^{(\aleph_0)}$ stands for the direct sum of countably many cyclic groups of order $2$.
\end{example}

\section{Acknowledgments}
  The author is thankful to M.\ Baake, C.\ Huck, R.\ V.\ Moody and P.\ Zeiner for valuable discussions 
  and comments on the manuscript. 
  This work was supported by the German Research Council (DFG), within the CRC 701.

\end{document}